\journal{{\tt arXiv.org}}
\definecolor{greenyellow}   {cmyk}{0.15, 0   , 0.69, 0   }
\definecolor{yellow}        {cmyk}{0   , 0   , 1   , 0   }
\definecolor{goldenrod}     {cmyk}{0   , 0.10, 0.84, 0   }
\definecolor{dandelion}     {cmyk}{0   , 0.29, 0.84, 0   }
\definecolor{apricot}       {cmyk}{0   , 0.32, 0.52, 0   }
\definecolor{peach}         {cmyk}{0   , 0.50, 0.70, 0   }
\definecolor{melon}         {cmyk}{0   , 0.46, 0.50, 0   }
\definecolor{yelloworange}  {cmyk}{0   , 0.42, 1   , 0   }
\definecolor{orange}        {cmyk}{0   , 0.61, 0.87, 0   }
\definecolor{burntorange}   {cmyk}{0   , 0.51, 1   , 0   }
\definecolor{bittersweet}   {cmyk}{0   , 0.75, 1   , 0.24}
\definecolor{redorange}     {cmyk}{0   , 0.77, 0.87, 0   }
\definecolor{mahogany}      {cmyk}{0   , 0.85, 0.87, 0.35}
\definecolor{maroon}        {cmyk}{0   , 0.87, 0.68, 0.32}
\definecolor{brickred}      {cmyk}{0   , 0.89, 0.94, 0.28}
\definecolor{red}           {cmyk}{0   , 1   , 1   , 0   }
\definecolor{orangered}     {cmyk}{0   , 1   , 0.50, 0   }
\definecolor{rubinered}     {cmyk}{0   , 1   , 0.13, 0   }
\definecolor{wildstrawberry}{cmyk}{0   , 0.96, 0.39, 0   }
\definecolor{salmon}        {cmyk}{0   , 0.53, 0.38, 0   }
\definecolor{carnationpink} {cmyk}{0   , 0.63, 0   , 0   }
\definecolor{magenta}       {cmyk}{0   , 1   , 0   , 0   }
\definecolor{violetred}     {cmyk}{0   , 0.81, 0   , 0   }
\definecolor{rhodamine}     {cmyk}{0   , 0.82, 0   , 0   }
\definecolor{mulberry}      {cmyk}{0.34, 0.90, 0   , 0.02}
\definecolor{redviolet}     {cmyk}{0.07, 0.90, 0   , 0.34}
\definecolor{fuchsia}       {cmyk}{0.47, 0.91, 0   , 0.08}
\definecolor{lavender}      {cmyk}{0   , 0.48, 0   , 0   }
\definecolor{thistle}       {cmyk}{0.12, 0.59, 0   , 0   }
\definecolor{orchid}        {cmyk}{0.32, 0.64, 0   , 0   }
\definecolor{darkorchid}    {cmyk}{0.40, 0.80, 0.20, 0   }
\definecolor{purple}        {cmyk}{0.45, 0.86, 0   , 0   }
\definecolor{plum}          {cmyk}{0.50, 1   , 0   , 0   }
\definecolor{violet}        {cmyk}{0.79, 0.88, 0   , 0   }
\definecolor{royalpurple}   {cmyk}{0.75, 0.90, 0   , 0   }
\definecolor{blueviolet}    {cmyk}{0.86, 0.91, 0   , 0.04}
\definecolor{periwinkle}    {cmyk}{0.57, 0.55, 0   , 0   }
\definecolor{cadetblue}     {cmyk}{0.62, 0.57, 0.23, 0   }
\definecolor{cornflowerblue}{cmyk}{0.65, 0.13, 0   , 0   }
\definecolor{midnightblue}  {cmyk}{0.98, 0.13, 0   , 0.43}
\definecolor{navyblue}      {cmyk}{0.94, 0.54, 0   , 0   }
\definecolor{royalblue}     {cmyk}{1   , 0.50, 0   , 0   }
\definecolor{blue}          {cmyk}{1   , 1   , 0   , 0   }
\definecolor{cerulean}      {cmyk}{0.94, 0.11, 0   , 0   }
\definecolor{cyan}          {cmyk}{1   , 0   , 0   , 0   }
\definecolor{processblue}   {cmyk}{0.96, 0   , 0   , 0   }
\definecolor{skyblue}       {cmyk}{0.62, 0   , 0.12, 0   }
\definecolor{turquoise}     {cmyk}{0.85, 0   , 0.20, 0   }
\definecolor{tealblue}      {cmyk}{0.86, 0   , 0.34, 0.02}
\definecolor{aquamarine}    {cmyk}{0.82, 0   , 0.30, 0   }
\definecolor{bluegreen}     {cmyk}{0.85, 0   , 0.33, 0   }
\definecolor{emerald}       {cmyk}{1   , 0   , 0.50, 0   }
\definecolor{junglegreen}   {cmyk}{0.99, 0   , 0.52, 0   }
\definecolor{seagreen}      {cmyk}{0.69, 0   , 0.50, 0   }
\definecolor{green}         {cmyk}{1   , 0   , 1   , 0   }
\definecolor{forestgreen}   {cmyk}{0.91, 0   , 0.88, 0.12}
\definecolor{pinegreen}     {cmyk}{0.92, 0   , 0.59, 0.25}
\definecolor{limegreen}     {cmyk}{0.50, 0   , 1   , 0   }
\definecolor{yellowgreen}   {cmyk}{0.44, 0   , 0.74, 0   }
\definecolor{springgreen}   {cmyk}{0.26, 0   , 0.76, 0   }
\definecolor{olivegreen}    {cmyk}{0.64, 0   , 0.95, 0.40}
\definecolor{rawsienna}     {cmyk}{0   , 0.72, 1   , 0.45}
\definecolor{sepia}         {cmyk}{0   , 0.83, 1   , 0.70}
\definecolor{brown}         {cmyk}{0   , 0.81, 1   , 0.60}
\definecolor{tan}           {cmyk}{0.14, 0.42, 0.56, 0   }
\definecolor{gray}          {cmyk}{0   , 0   , 0   , 0.50}
\definecolor{black}         {cmyk}{0   , 0   , 0   , 1   }
\definecolor{white}         {cmyk}{0   , 0   , 0   , 0   } 
\newcommand{\externaltikz}[2]{\includegraphics{Externals/t#1}}
\pgfplotsset{compat=1.12}
\newlength\figurewidth
\newlength\figureheight
\newcounter{tikzsubfigcounter}[figure]
\renewcommand{\thetikzsubfigcounter}{\thesection.\the\numexpr\value{figure}+1\relax\alph{tikzsubfigcounter}}
\newcounter{tikzsubfigcounterinvisible}[figure]
\renewcommand{\thetikzsubfigcounterinvisible}{\thesection.\the\numexpr\value{figure}+1\relax\alph{tikzsubfigcounterinvisible}}
\newcommand{\settikzlabel}[1]{ %
\refstepcounter{tikzsubfigcounterinvisible} \label{#1} 
}
\newtheorem{thm}{\bf Theorem}[section]
\newtheorem{lem}[thm]{\bf Lemma}
\numberwithin{equation}{section}
\newcommand{\bdm}{\begin{displaymath}}
\newcommand{\edm}{\end{displaymath}}
\newcommand{\beq}{\begin{equation}}
\newcommand{\eeq}{\end{equation}}
\newcommand{\beqa}{\begin{eqnarray}}
\newcommand{\eeqa}{\end{eqnarray}}
\title{A Comparative Study of Limiting Strategies in Discontinuous Galerkin Schemes for the $M_1$ Model of Radiation Transport}
\author[pc]{Prince Chidyagwai}
\author[mf]{Martin Frank}
\author[fs]{Florian Schneider}
\author[bs]{Benjamin Seibold}
\address[pc]{Department of Mathematics and Statistics, Loyola University Maryland, 4501 N Charles Street, Baltimore, MD 21210 {\tt pchidyagwai@loyola.edu}}
\address[mf]{Department of Mathematics, RWTH Aachen University, Schinkelstr. 2,52062 Aachen, Germany, {\tt frank@mathcces.rwth-aachen.de}}
\address[fs]{Fachbereich Mathematik, TU Kaiserslautern, Erwin-Schr\"odinger-Str., 67663 Kaiserslautern, Germany, {\tt schneider@mathematik.uni-kl.de}}
\address[bs]{Department of Mathematics, Temple University, 1805 N Broad Street, Philadelphia, PA 19122, {\tt seibold@temple.edu}}
\date{}
\newcommand{\abs}[1]{\left|#1\right|}
\newcommand{\bfU} { \mbox{\boldmath $U$} }
\newcommand{\bfn} { \mbox{\boldmath $n$} }
\newcommand{\bfF} { \mbox{\boldmath $F$} }
\newcommand{\bfM} { \mbox{\boldmath $M$} }
\newcommand{\bfH} { \mbox{\boldmath $H$} }
\newcommand{\bfS} { \mbox{\boldmath $S$} }
\newcommand{\sF}  { \mbox{$\mathcal{F}$} }
\newcommand{\psiz}{\psi^{(0)}}
\newcommand{\psio}{\psi^{(1)}}
\newcommand{\psit}{\psi^{(2)}}
\newcommand{\psizbar}{\bar{\psi}^{(0)}}
\newcommand{\psiobar}{\bar{\psi}^{(1)}}
\pgfplotsset{
plotstylea/.style={royalblue!70,every mark/.append style={solid,line width = 0pt,fill=royalblue!60!black},mark=ball}, plotstyleb/.style={dashed,no marks}, }
\begin{document}

\begin{abstract}
    The $M_1$ minimum entropy moment system is a system of hyperbolic balance laws that approximates the radiation transport equation, 
    and has many desirable properties. 
    Among them are symmetric hyperbolicity, entropy decay, moment realizability, and correct behavior in
    the diffusion and free-streaming limits. However, numerical difficulties arise when approximating the solution
    of the $M_1$ model by high order numerical schemes; namely maintaining the realizability of the numerical solution
    and controlling spurious oscillations. In this paper, we extend a previously constructed one-dimensional realizability limiting strategy to 2D. 
    In addition, we perform a numerical study of various combinations of the realizability limiter and the TVBM local slope limiter 
    on a third order Discontinuous Galerkin (DG) scheme on both triangular and rectangular meshes. In several test cases, 
    we demonstrate that in general, a combination of the realizability limiter and a TVBM limiter is necessary to obtain a robust and accurate numerical scheme.
    Our code is published so that all results can be reproduced by the reader.
\end{abstract}
\begin{keyword}
discontinuous Galerkin \sep moment models \sep minimum entropy \sep realizability limiter
\MSC[2010] 35L40 \sep 65M08 \sep 65M60 \sep 65M70 
\end{keyword}
\maketitle

\noindent

\section{Introduction}
The $M_1$ model of radiative transfer is a nonlinear system of hyperbolic balance laws, and reads
\begin{subequations}
\begin{align}
    \partial_t \psiz + \nabla_x\cdot \psio &= -\sigma_a \psiz + q^{(0)} \label{eq:m1_1} \\
    \partial_t \psio + \nabla_x \cdot \psit(\psiz,\psio) &= -(\sigma_s+\sigma_a) \psio + q^{(1)}.\label{eq:m1_2}
\end{align}
\end{subequations}

The quantities $\psiz$, $\psio$, $\psit$ are, respectively, the zeroth (particle density), first (mean velocity) and second moment (pressure) over the unit sphere $S^2$ of the angular flux $\psi$, 
$$
\psiz := \int_{S^2} \psi(\Omega)d\Omega,\quad \psio := \int_{S^2} \Omega\psi(\Omega)d\Omega,\quad\psit := \int_{S^2} \Omega\Omega^T\psi(\Omega)d\Omega, 
$$
and are, respectively, a scalar, a vector, and a matrix. The system is supplemented by the closure condition
\begin{equation}
\label{eq:closure}
\psit(\psiz,\psio) = D(\frac{\psio}{\psiz}) \psiz,
\end{equation}
where 
\begin{gather}
D(n) = \frac{1-\chi(|n|)}{2}\text{id} + \frac{3\chi(|n|)-1}{2}\frac{nn^T}{|n|^2}\text{ and }\chi(f) = \frac{3+4f^2}{5+2\sqrt{4-3f^2}}~\text{ for } f\in[0,1].
\label{eq:D_exp}
\end{gather}
Here, $D$ is a matrix, $n=\frac{\psio}{\psiz}$ is a vector, and $f=|n|$ is a scalar. The quantity $\chi(f)$ is called the Eddington factor \cite{Lev84,BruHol01}. Let $X \subset \mathbb{R}^2$ be a bounded polygonal domain, then the system \eqref{eq:m1_1} -\eqref{eq:m1_2} 
can be written as a general first-order system of balance laws
\begin{gather}
	\dfrac{\partial U}{\partial t} + \nabla \cdot \sF   =  S,\text{ in } X \times (0,T) \label{eq:standard_form}\\
U(x,y,0) = u_{0}(x,y),\text{ for } (x,y)  \in X \label{eq:initial_condition}\\
\end{gather}
where $\displaystyle \sF = [F,G]$ and
$$
U = \begin{bmatrix} \psiz \\ \psio_x \\ \psio_y \end{bmatrix},\ 
F = \begin{bmatrix} \psio_x \\ \psit_{xx} \\ \psit_{xy} \end{bmatrix},\ 
G = \begin{bmatrix} \psio_y \\ \psit_{yx} \\ \psit_{yy} \end{bmatrix},\ 
S = \begin{bmatrix} -\sigma_a \psiz + q^{(0)} \\ -(\sigma_s+\sigma_a) \psio_x + q^{(1)}_x \\ -(\sigma_s+\sigma_a) \psio_y + q^{(1)}_y \end{bmatrix},
$$
keeping in mind the closure relation \eqref{eq:closure}. Here, $\psio_x$ and $\psio_y$ denote the first and second component of $\psio$, respectively, similarly for $\psit_{xx}$ to $\psit_{yy}$. 

The system has to be complemented with boundary conditions of the form
\begin{align}
U(x,y,t) = \gamma(x,y,t),\text{ for } (x,y) \in I(\partial X,U) \times (0,T) \label{eq:boundary_condition},
\end{align}
where the operator $I$ returns the influx boundaries, i.e.\  those parts of the boundary where information is transported into the domain \cite{toro2009riemann}. Whether a part of the boundary is an influx boundary also depends on the solution itself, since the information direction can be read off the sign of the eigenvalues of the directional Jacobian in normal direction at these points. In our numerical experiments, we consider
compactly supported initial data which do not reach the boundary
before the final time. We prescribe Dirichlet boundary conditions
identical to the initial conditions, hence the numerical solution is
unaffected by the boundaries.

The expression for $D$ in \eqref{eq:D_exp} comes from closing the moment system by an entropy closure using the entropy for photons. See for example \cite{Dre87,DubFeu99,Frank07,jaynes1957info,Lev84,levermore1996moment,MullerRuggeri,Ore55,Ros54} and references therein for more information. These references also discuss many properties which make the $M_1$ model, and entropy closures in general, quite appealing. Among these are symmetric hyperbolicity, i.e.\  the system can be transformed into a symmetric hyperbolic system, and a natural entropy-entropy flux pair, both of which ensure some level of well-posedness \cite{toro2009riemann,LeVeque2002}. The hyperbolicity is of main interest in this work, and will be discussed in detail below. We especially focus on its connection to realizability -- the fact that a moment vector can be reproduced by a non-negative particle density.

When the $M_1$ model is discretized using a monotone first-order scheme for hyperbolic equations (e.g. the Lax-Friedrichs scheme), it can be shown that the numerical solution will never leave the set of realizable moments \cite{Olbrant12}. This means that starting with a realizable initial condition, the numerical solution will be realizable at every time step. This is in particular required, since the model is not well-defined outside of the realizability set.

Unfortunately, and this is the main topic of this paper, higher-order numerical schemes do not automatically preserve this property, as will be discussed later. We use the Discontinuous Galerkin (DG) method, which provides a general framework to construct numerical schemes of arbitrary approximation order to solve hyperbolic balance laws. The original DG method was introduced in $1973$ by Reed and Hill \cite{reedhill} for neutron transport and has been developed further to the Runge-Kutta Discontinuous Galerkin (RKDG) method by Cockburn et al.\ in a series of papers \cite{CockburnShuIV,CockburnShuPk,CockburnShuP1,CockburnLinShu,CockburnShuV}. A $(k+1)st$ order RKDG method uses a piecewise-polynomial approximation of degree $k$ in space and a ($k+1)st$ order strong stability preserving explicit Runge-Kutta scheme in time. 

A DG scheme has to be supplemented by a limiting strategy. This typically consists of a slope limiter which ensures stability of the solution. A standard example of such a limiter is the TVBM limiter \cite{CockburnLinShu}. For a system of balance laws, limiting the
conserved variables component-wise can result in oscillations, due to the Gibbs phenomenon \cite{Shu1998}. Instead, the limiter has to be applied in characteristic variables, i.e.\ \ Riemann invariants that are obtained by diagonalizing the system Jacobian \cite{biswas1994parallel,CockburnShuV}. In general, limiting in the characteristic variables gives superior results \cite{review_article,Olbrant12}. However, there are numerous examples
for which the numerical solution obtained by limiting in the characteristic variables and component-wise limiting are comparable, e.g. \cite{biswas1994parallel,li2011central,liu2007runge,liangdiscontinuous,liu_xu_gas_kinetic}. In these situations, since the transformation to and from the characteristics requires additional computational effort, component-wise 
limiting is faster. 

It has been shown in \cite{Olbrant12} that a robust numerical approximation of the $M_1$
model in 1D requires an additional limiter: a realizability limiter
which ensures that the model remains well-posed. This has been extended to the general, one-dimensional, $M_N$ case in \cite{Schneider2015a}. In the current work, we extend this realizability limiting strategy to two dimensions. In addition, we demonstrate that in the absence of the realizability limiter, the TVBM limiter applied in the characteristic variables
leads to a qualitatively superior solution compared to component-wise limiting; however, this limiter alone is not enough to guarantee moment realizability of the scheme.

The $M_1$ model therefore might serve as a good benchmark test for DG implementations, because it absolutely requires very careful limiting. 
We arrive at this conclusion experimentally, by running several well-known radiation transport test cases with two independently developed third-order 
DG schemes on unstructured triangular grids and regular rectangle grids. 

The rest of this paper is organized as follows. 
In Section~\ref{sec:M1}, we describe in detail the concept of realizability, and how it is connected to the well-posedness of the $M_1$ model. 
The DG implementations are described in Section~\ref{sec:DG}. The realizability limiter will be developed and analyzed in 
Section \ref{sec:limiter}. Section \ref{sec:numerical_results} contains the results from the test cases.

\section{Properties of the $M_1$ Model}
\label{sec:M1}
From the general theory of moments one can deduce
conditions on a given set of values, that are necessary and sufficient
for the existence of a probability measure whose moments match these
values \cite{Ker76}. Such moments are called realizable. In our case, we can restrict ourselves to measures that have a formal density (i.e.\ for the closure we allow Dirac $\delta$'s as densities).
Given a scalar $\psiz$ and a vector $\psio$, these are the zeroth and first moment of a non-negative density if and only if \cite{Ker76,Schneider2016,Monreal}
$$
\psiz>0 \text{ and } |\psio|\leq \psiz.
$$
The interior of the realizable set is described by $\psiz>0 \text{ and } |\psio| < \psiz$.
It has been shown in \cite{Goudon2005} that, under reasonable assumptions on the initial conditions, the analytical solution of the $M_1$ model in one spatial dimension remains realizable for all time. Until now, no similar statement could be made for the $M_1$ model in multiple dimensions or higher-order $M_N$ models. 

As mentioned before, the hyperbolicity of the $M_1$ model is a direct consequence of the entropy closure. Additional insight can be obtained by computing the eigenvalues explicitly. Given a unit direction vector $\bfn=(n_x,n_y)^T$, we have to determine the eigenvalues of the directional Jacobian
\begin{equation}
\label{eq:jacobian}
n_xJ_x(U) + n_yJ_y(U),
\end{equation}
where $J_x$ and $J_y$ are the Jacobians of $F$ and $G$, respectively. Because of the invariance of the model under coordinate transformations, the eigenvalues can only depend on the angle between $\psio$ and $\bfn$, and the absolute value $\frac{|\psio|}{\psiz}$. The eigenvalues can be computed analytically, but the formulas are very lengthy so we do not show them here (cf.\ \cite{Berthon2006}). Figure \ref{fig:EV} shows the eigenvalues for two different angles. The most important observation is that the eigenvalues collapse into one value at the boundary of the realizability region, i.e.\ for $\frac{|\psio|}{\psiz}=1$. Inspection of the Jacobian shows that at that boundary the Jacobian is no longer diagonalizable, i.e.\ the $M_1$ model loses (strict) hyperbolicity. This means that the $M_1$ model is only well-posed in the interior of the domain of realizability. As a consequence,  in a numerical scheme one should always ensure staying in the interior of the relizability domain.

\begin{figure}

\centering
\externaltikz{eigenvalues1}{\input{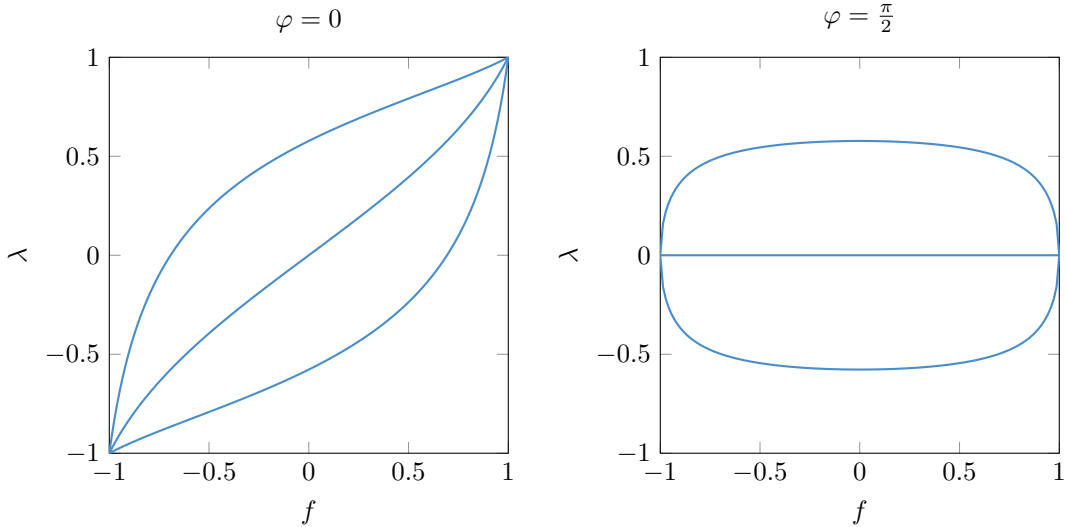}}  
\caption{Eigenvalues of the $M_1$ model as a function of $f=\frac{n\cdot\psio}{\psiz}$ for a fixed angle $\varphi$ between $\psio$ and $n$.}
\label{fig:EV}
\end{figure}
\section{Runge-Kutta Discontinuous Galerkin Method}
\label{sec:DG}

\subsection{Spatial Discretization}
In this section, we describe the Runge-Kutta Discontinuous Galerkin method (RKDG) for solving~\eqref{eq:standard_form}--\eqref{eq:boundary_condition}. 
Following the approach outlined in a series of papers by Cockburn and Shu
\cite{CockburnShuIV,CockburnShuPk,CockburnShuP1,CockburnShuV}, we discretize in space using piece-wise
polynomials of degree $k$, that are allowed to be discontinuous at the cell interface. The time discretization is performed by a strong stability preserving explicit Runge-Kutta scheme of order $k+1$ \cite{gottlieb_shu_tadmor}.

In the following, let $\mathcal{T}_h$ be a partition of a polygonal computational domain $X$ and $K$ be
an element in $\mathcal{T}_h$ with boundary edges $e$. For each $t \in [0,T]$, we seek an approximate solution 
$U_h(x,t)$ in the DG space 
\[
V_h^k = \{v \in L^\infty(X): v|_{K} \in P^k(K), \forall K \in \mathcal{T}_h \}
\]
where $P^{k}(K)$ is the set of polynomials of degree $k$. We follow the Galerkin approach: first we multiply (\ref{eq:standard_form}) 
by a smooth test function $v$ and integrate by parts over each element. We replace the exact solution $U$ and smooth test function $v$ 
by the approximation solution $U_h$ and $v_h$ (both in $V_h^k$) respectively to obtain:
\begin{gather}
\label{eq:dweakform1}
\dfrac{d}{dt} \int_{K} U_h(x,t)v_h(x)\,dx + \sum_{e \in \partial K} \int_{e}\sF(U_h(x,t))\cdot
\bfn_{e,K} v_h(x)\,d\Gamma \nonumber\\ -\int_{K} \sF(U_h(x,t))	 \cdot \nabla v_h(x)\,dx 
= \int_{K} S\big(U_h(x,t) \big)v_h(x)\,dx \quad\forall v_h \in V_h^k,
 \label{eq:dweakform1a}
 \end{gather}
 where $\bfn_{e,K}$ is the outward unit normal to the boundary of $K$. We take $U_h(x,0)$ on each element to be the
 $L_2$-projection of the initial condition on $V_h^k$, i.e., 
 \begin{gather}
	 \int_{K} U_h(x,0)v_h(x)\,dx = \int_{K}u_0 v_h(x)\,dx, \quad \forall v_h \in V^k_h.
	 \label{eq:initial_condition}
 \end{gather}
The choice of a discontinuous basis implies that our approximate solution $U_h(x,t)$ is discontinuous across edges. 
In this case the normal trace $\displaystyle \sF(U_h(x,t))\cdot \bfn_{e,K}$ is not defined on the element boundary $\partial K$. 
We replace the normal trace by a numerical flux function $H_{e,K}(x,t)$ that depends
on the approximate solution from the interior and exterior of the element $K$. Thus we define
\begin{align}
\displaystyle H_{e,K}(x,t) = H_{e,K}(U_h(x^{int\{K\}},t), U_h(x^{ext\{K\}},t)), 
\label{eq:numerical_flux}
\end{align}
where
\begin{align*}
	U_h(x^{int\{K\}},t) &= \lim_{\xi\rightarrow (x,y) \in K}U_h(\xi,t),
\end{align*}
for the approximate solution defined from the interior of the element $K$ and
\begin{align*}
U_h(x^{ext\{K\}},t) = \left \{ \begin{array}{l} \gamma_h(x,t) \quad \textrm{ if } x \in \partial X,\\
                    \displaystyle\lim_{\xi\rightarrow (x,y) \notin K } U_h(\xi,t), \quad\textrm{otherwise} 
                    \end{array}
                    \right.
\end{align*}
for the points on the exterior of $K$.
Using the numerical flux \eqref{eq:numerical_flux}, the discrete weak formulation (\ref{eq:dweakform1})--(\ref{eq:initial_condition}) becomes
\begin{align}
\label{eq:dweakform2}
\dfrac{d}{dt} \int_{K} U_h(t,x)v_h(x)\,dx &+ \sum_{e \in \partial K}\int_{e} H_{e,K}(U_h(x^{int\{K\}},t),U_h(x^{ext\{K\}},t)) v_h(x)\,d\Gamma \nonumber\\
-\int_{K} \sF(U_h(x,t))\cdot \nabla v_h(x)\,dx                                                   
&= \int_{K} S(U_h(t,x))v_h(x)\,dx, \quad \forall v_h \in V^k_h,\\
\label{eq:dweakform2a}
\int_{K} U_h(x,0)v_h(x)\,dx &= \int_{K}u_0 v_h(x)\,dx, \quad \forall v_h \in V^k_h.
\end{align}
We choose the global Lax-Friedrichs flux
\begin{align}
\label{eq:LFFlux}
	H_{e,K}(a,b) = \dfrac{1}{2} \bigg[\sF(a)\cdot \bfn_{e,K} + \sF(b)\cdot \bfn_{e,K} - \alpha (b-a)\bigg],
\end{align}
where the numerical viscosity constant $\alpha$ is taken as the global estimate of the absolute value of the largest eigenvalue of the Jacobian \eqref{eq:jacobian}. 
For the $M_1$ model, we can take $\alpha=1$.

Boundary conditions \eqref{eq:boundary_condition} have to be incorporated via the quantities $U_h(x^{ext\{K\}},t)$ in the edge integral, if the edge $e$ is part of the boundary. We take the simplest approach possible by adding so-called ``ghost cells'' which are then filled with the corresponding values for $U$. 
Note, however, that the validity of this approach, due to its inconsistency with 
the original boundary conditions  \eqref{eq:boundary_condition}, is not entirely non-controversial, but the question of appropriate boundary conditions for
moment models is an open problem \cite{pomraning1964variational,Larsen1991,Rulko1991,Struchtrup2000,levermore2009boundary} which is not explored here. 

Equations (\ref{eq:dweakform2})--(\ref{eq:dweakform2a}) can be written as a system of ODEs on each element after inverting the mass matrix in~\eqref{eq:dweakform2}.
Indeed, let $\displaystyle \{\varphi_1, \varphi_2, \cdots ,\varphi_{N_k} \}$ denote a basis of the space of 
polynomials of degree $k$ on cell $K$, where $\displaystyle N_k = \dfrac{1}{2}(k+1)(k+2)$. On each element, the
DG approximate solution $U_h$ of the components has the form
\begin{align}
\label{eqn:solution_form}
\psi^{(0)} = \sum_{i=1}^{N_k}\alpha^{\psi^{(0)}}_i \varphi_i,\quad \psi_x^{(1)} = \sum_{i=1}^{N_k}\alpha^{\psi^{(1)}_x}_i \varphi_i, \textrm{ and }
\psi_y^{(1)} = \sum_{i=1}^{N_k}\alpha^{\psi^{(1)}_y}_i \varphi_i,
\end{align}
where $\alpha^{\psi^{(0)}}_i, \alpha^{\psi^{(1)}}_x, \alpha^{\psi^{(1)}}_y$ are unknowns to be determined.
Using the form of the approximate solution in (\ref{eqn:solution_form}), we can write (\ref{eq:dweakform2})-(\ref{eq:dweakform2a}) in matrix form:
\begin{gather}
\label{eqn:weakform_m}
\bfM\dfrac{\partial}{\partial t} \alpha_h + \bfH \alpha_h - \bfF\alpha_h = \bfS \alpha_h \\
\label{eqn:initial_condition}
\bfM \alpha^0_h = \bfU_0
\end{gather}
where $\alpha_h$ is the vector of solution coefficients and 
\begin{gather}
	(\bfM)_{ij} = \int_{K}\varphi_j \varphi_i,\quad (\bfH)_{i} = \sum_{e \in \partial K} \int_{e}H_{e,K}(x,t)\varphi_i,
    \quad (\bfF)_{i} = \int_{K}\mathcal{F}(U_h)\cdot \nabla \varphi_i\\
	(\bfS)_i = \int_{K} S(U_h)\varphi_i, \textrm{ and } (\bfU_0)_i = \int_{K}u_0\varphi_i.
\end{gather}
The complete coefficient vector is given by $\displaystyle \alpha_h = \{\alpha^{\psiz}_1, \cdots, \alpha^{\psiz}_{N_k}, \alpha^{\psio_x}_{1},\cdots,\alpha^{\psio_x}_{N_k},\alpha^{\psio_y}_{1},\cdots,\alpha^{\psio_y}_{N_k} \}$, where $\displaystyle \alpha^{\psiz}_i, \alpha^{\psio_x}_i, \alpha^{\psio_y}_i$ 
are the coefficients of the numerical approximation to the zeroth and first order moments, respectively. 
We can write \eqref{eqn:weakform_m}--\eqref{eqn:initial_condition} in the form
\begin{gather}
\label{eq:ode1}
\dfrac{d}{dt} \alpha_h = \mathcal{L}_h(\alpha_h), \quad \textrm{on } X \times (0,T) \\
\label{eq:ode2}
\alpha_h(x,0) = \alpha^{0}_h 
\end{gather}
where
\begin{gather}
\label{eq:ode3}
\mathcal{L}_h(\alpha_h) = \bfM^{-1}(\bfS\alpha_h + \bfF \alpha_h - \bfH \alpha_h).
\end{gather}
We approximate the solution by discontinuous quadratic polynomials in space and the third-order strong stability preserving Runge-Kutta time discretization scheme 
proposed in \cite{CWShu,Shu1988}, also known as the Shu-Osher scheme. Let $\{t^n\}_{n=0}^{N}$ be a partition of $[0,T]$ and let $\Delta t = t^{n+1} - t^n$, $n = 0, \cdots, N-1$, then the time
stepping scheme updating the coefficients of the DG polynomials can be written as:
\begin{center}
	\begin{itemize}
        \item  Set  $\alpha^0 =\bfM^{-1}\bfU_0$;
			\item  For  $n = 1, \cdots, N-1$  compute  $\alpha_h^{n+1}$ as follows:
				\begin{enumerate}
					\item $\displaystyle \alpha_h^{(1)} = \alpha_h^n + \Delta t_{n} \mathcal{L}_h(\alpha^n_h)$
					\item $\displaystyle \alpha_h^{(2)} = \dfrac{3}{4}\alpha_h^n + \dfrac{1}{4}(\alpha_h^{(1)} + \Delta t_{n} \mathcal{L}_h(\alpha_h^{(1)}))$
					\item $\displaystyle \alpha_h^{(3)}= \dfrac{1}{3}\alpha_h^{n} + \dfrac{2}{3}(\alpha_h^{(2)} + \Delta t_{n} \mathcal{L}_h(\alpha_h^{(2)}))$
					\item Set $\alpha_h^{n+1} = \alpha_h^{(3)}$,
				\end{enumerate}
		\end{itemize}
\end{center}
This Runge-Kutta method is a convex combination of (iterated) forward Euler steps. Using the convexity of the realizability domain, one can show 
that it preserves realizability under a specific CFL-condition. We use this property to achieve high order also in time without the need of dealing with complicated time-discretizations in the proof of realizability preservation.

\subsection{Quadrature Rules}
The assembly of the discrete operator $\displaystyle \mathcal{L}_h(\alpha_h)$ (\ref{eq:ode1}) is done using numerical
quadrature that is exact for polynomials of degree $2k+1$ for edge integrals, and degree $2k$ for volume
integrals, respectively for both the rectangular and triangular meshes. In addition, we will need a modified Gaussian quadrature 
rule to construct the realizability limiter. This rule consists of quadrature points in the interior of the cell and on its boundary where the latter form one-dimensional quadratures on the element 
edges. This is crucial to balance the different appearing types of spatial integrals in the proof of the realizability-preserving property.
These rules are described in the following section that closely follows the construction in \cite{ZhangShu12}.

\subsubsection{Triangles}
We start with two quadrature rules on the interval $[-\frac{1}{2},\frac{1}{2}]$, a Gaussian rule
$$
v^\beta = \frac{1}{2}\begin{pmatrix}
-\sqrt{\frac{3}{5}}&0&\sqrt{\frac{3}{5}}
\end{pmatrix},\quad w^\beta = \frac{1}{18} \begin{pmatrix}
5 & 8 & 5
\end{pmatrix}
$$
and a Gauss-Lobatto rule
\begin{align}
\label{eq:GLrule}
u^\alpha = \frac{1}{2}\begin{pmatrix}
-1&0&1
\end{pmatrix},\quad
\hat{w}^\alpha = \frac{1}{6} \begin{pmatrix}
1 & 4 & 1
\end{pmatrix}.
\end{align}
We map the tensor product rule of those Gauss and Gauss-Lobatto quadrature rules from the square $[-\frac{1}{2},\frac{1}{2}]^2$ onto the triangle $K$ with vertices $V_1^K,\ldots,V_3^K$ using the three degenerate projections 
\begin{align*}
g_1(u,v) &= (\frac{1}{2}+v)V^K_1 + (\frac{1}{2}+u)(\frac{1}{2}-v)V^K_2+(\frac{1}{2}-u)(\frac{1}{2}-v)V^K_3\\
g_2(u,v) &= (\frac{1}{2}+v)V^K_2 + (\frac{1}{2}+u)(\frac{1}{2}-v)V^K_3+(\frac{1}{2}-u)(\frac{1}{2}-v)V^K_1\\
g_3(u,v) &= (\frac{1}{2}+v)V^K_3 + (\frac{1}{2}+u)(\frac{1}{2}-v)V^K_1+(\frac{1}{2}-u)(\frac{1}{2}-v)V^K_2.
\end{align*}
Let $\bar{U}_K$ denote the cell average of the numerical solution $U_h$ represented by the DG polynomial $p_K$ on triangle $K$, following \cite{ZhangShu12} it holds that 
\begin{align*}
\bar{U}_K = \frac{2}{3}\sum\limits_{i=1}^{3}\sum\limits_{\alpha=1}^{3}\sum\limits_{\beta=1}^{3}p_K\left(g_i\left(\hat{u}^\alpha,v^\beta\right)\right)\left(\frac{1}{2}-v^\beta\right)w^\alpha\hat{w}^\beta.
\end{align*}
Note that the Gauss-Lobatto points on the edge are always taken twice (see Figure \ref{fig:Quadrature}).
\begin{figure}[htbp]
\centering
\settikzlabel{fig:quadrulestrianglePos}
\settikzlabel{fig:quadrulestriangleWeights}
\externaltikz{quadrulestriangle}{\input{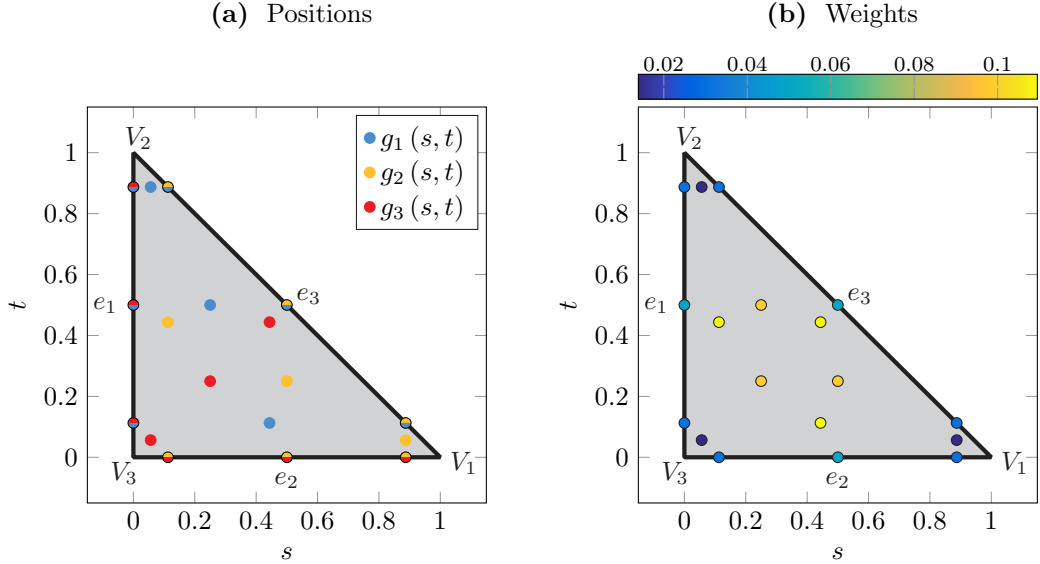}}  
\caption{Position of the quadrature nodes under the three projections $g_i$. The color corresponds to the weight $\tilde{w}^\gamma$ at the quadrature node.}
\label{fig:Quadrature}
\end{figure}

The authors of \cite{ZhangShu12} showed that the cell mean can be constructed as the following convex combination of inner and boundary points:
\begin{align}
\label{eq:Cellmean}
\bar{U}_{K} = \sum\limits_{i=1}^{3}\frac{2}{3}w^\beta\hat{w}^{\alpha=1} U_{i,\beta}^{int}+\sum\limits_{\gamma=1}^{L}\tilde{w}^\gamma U_\gamma^{inner}
\end{align}
where $U_{\gamma}^{inner}$ and $U_{i,\beta}^{int}$ represent the evaluation of the DG polynomial $p_K$ at the respective interior and boundary quadrature nodes 
$(x_{\gamma}^{inner},y_{\gamma}^{inner})$ and $(x_{i,\beta}^{int},y_{i,\beta}^{int})$ (compare Figure~\ref{fig:Quadrature}), and $L = 3(N-2)(k+1) = 9$ is the number of inner points. With this we obtain a quadrature rule with $18$ points which is accurate for polynomials of order $2k-1=3$.

The flux term in \eqref{eq:dweakform2} should be approximated by the $1D$ $(k+1)$ point Gauss quadrature with weights $w^\beta$:
\begin{align}
\label{eq:FluxGauss}
\sum\limits_{e\in \partial K}\int_e H_{e,K}(x,t) \varphi_i^K(x)~d\Gamma = \sum\limits_{i=1}^3\sum\limits_{\beta = 1}^{3} H_{e_i,K}(U_{i,\beta}^{int},U_{i,\beta}^{ext},t) \varphi_i^K(x_{i,\beta}^{int},y_{i,\beta}^{int})w^\beta l^i_K
\end{align}
where $l^i_K$ is the length of edge $e_i^K$. 

\subsubsection{Rectangles}
For the assembly of the discrete operator in the rectangular DG code we use a tensor quadrature rule of the 1D  $4$-point Gauss-Lobatto rule on $[-\frac12,\frac12]$:
$$
u^\alpha = \cfrac{1}{2\sqrt{5}}\begin{pmatrix}
-\sqrt{5}&-1&1&\sqrt{5}
\end{pmatrix},\quad
\hat{w}^\alpha = \frac{1}{12} \begin{pmatrix}
1 & 5 & 5 & 1
\end{pmatrix}.
$$
However for the proof of our main theorem we only need the three point rule \eqref{eq:GLrule} which will give a weaker CFL-condition.

\subsection{Slope Limiting}
In order to enforce stability \cite{Cockburn,hesthaven2007nodal} and to mitigate numerical oscillations caused by the Gibbs phenomenon we apply a slope limiter
to every stage of the Runge-Kutta time stepping scheme. We implement the slope limiter detailed 
by Cockburn and Shu in \cite{CockburnShuIV}
\begin{gather}
	\overline{m}(a_1,a_2,\cdots, a_n) = \begin{cases} a_1 & \mbox{ if } |a_1| < M (\Delta x)^2 \\
										m(a_1,a_2,\cdots,a_n) & \mbox{ otherwise,}
\end{cases}
\label{eq:dg_slope_limiter}
\end{gather}
where $M$ is an estimate of the second derivative of the solution, $\Delta x$ is the characteristic length of
each element and $\overline{m}$ is the TVB modified
minmod function. The limiter can be applied component-wise to the primitive variables
or in the characteristic variables. For limiting in the characteristic variables we first construct a matrix
$\mathcal{R}$ that diagonalizes the directional Jacobian in the normal direction $\bfn = (n_x,n_y)$ evaluated at the mean in each element
\begin{gather}
\label{eq:trafo}
	\mathcal{R}^{-1} (n_x J_x(\overline{U}_k) + n_y J_y(\overline{U}_k))\mathcal{R} =  \Lambda.
\end{gather}
Applying the limiter in the characteristic variables ensures that the solution is total variation bounded in the means (TVBM).

\section{Realizability Limiter}
\label{sec:limiter}
We want to construct a scheme so that the numerical solution stays realizable with respect to the cell means. The proofs presented in this section follow the strategy used in {\cite{Zhang2010}, which has also been used in the construction of realizability-preserving limiters for the $M_1$ model in 1D \cite{Olbrant12}. 

First we need a technical lemma.
\begin{lem}
\label{lem:Lemma3}
Let $\nu\in \mathbb{R}^2$ be an arbitrary unit vector. For the $M_1$ model the combination of moments $(\psiz\pm \psio\cdot\nu,\psio\pm \psit\cdot \nu)$ is realizable.
\end{lem}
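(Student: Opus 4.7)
The plan is to exhibit an explicit non-negative angular density on $S^2$ whose zeroth and first moments are precisely $\psiz \pm \psio\cdot\nu$ and $\psio\pm \psit\cdot\nu$, and then invoke the realizability characterization given at the beginning of Section~\ref{sec:M1}: a scalar/vector pair $(a,b)$ is realizable if and only if $a>0$ and $|b|\le a$ (with the appropriate convention at the boundary).

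First I would use the fact that $\psit$ is the second moment from the $M_1$ (minimum-entropy) closure, so there exists a non-negative density $\psi\colon S^2\to\mathbb{R}_{\ge 0}$ with
\[
\psiz=\int_{S^2}\psi(\Omega)\,d\Omega,\qquad \psio=\int_{S^2}\Omega\,\psi(\Omega)\,d\Omega,\qquad \psit=\int_{S^2}\Omega\Omega^T\psi(\Omega)\,d\Omega.
\]
The key idea is then to reweight this density by the non-negative factor $1\pm\Omega\cdot\nu$. Since $\nu$ and $\Omega$ are unit vectors, Cauchy-Schwarz gives $|\Omega\cdot\nu|\le 1$, so
\[
\phi_\pm(\Omega):=(1\pm\Omega\cdot\nu)\,\psi(\Omega)\ge 0 \quad\text{for all }\Omega\in S^2.
\]
Thus $\phi_\pm$ is itself a valid non-negative angular density.

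A direct computation of the zeroth and first moments of $\phi_\pm$ then yields
\[
\int_{S^2}\phi_\pm(\Omega)\,d\Omega=\psiz\pm\psio\cdot\nu,\qquad
\int_{S^2}\Omega\,\phi_\pm(\Omega)\,d\Omega=\psio\pm\psit\cdot\nu,
\]
where the second identity uses $\Omega(\Omega\cdot\nu)=(\Omega\Omega^T)\nu$ pointwise. Hence the pair $(\psiz\pm\psio\cdot\nu,\,\psio\pm\psit\cdot\nu)$ is the pair of zeroth and first moments of the non-negative density $\phi_\pm$, and is therefore realizable by the characterization recalled above.

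There is essentially no hard step: the construction is transparent once one thinks of shifting the weight on $S^2$ by the non-negative factor $1\pm\Omega\cdot\nu$. The only subtle point worth mentioning in the write-up is the distinction between the closed and open realizability set: if $(\psiz,\psio)$ lies in the interior (i.e.\ $\psiz>0$ and $|\psio|<\psiz$) and $\psi>0$ on a set of positive measure away from the equator $\Omega\cdot\nu=\mp 1$, then $\phi_\pm$ inherits strict positivity of the zeroth moment and strict inequality $|\psio\pm\psit\cdot\nu|<\psiz\pm\psio\cdot\nu$, so realizability holds in the strict sense required for well-posedness of the $M_1$ model.
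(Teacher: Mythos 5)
Your proof is correct and follows essentially the same route as the paper: both arguments reweight a realizing non-negative density $\psi$ by the non-negative factor $1\pm\Omega\cdot\nu$ and observe that the resulting density has exactly the combined moments $(\psiz\pm\psio\cdot\nu,\,\psio\pm\psit\cdot\nu)$. The paper's proof is a one-liner that omits the explicit moment computation and the remark about strict versus non-strict realizability, but the underlying idea is identical.
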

\begin{proof}
Let $\psi$ be a non-negative density that realizes $\psiz$ and $\psio$. Then $(1\pm\nu\cdot\Omega)\psi(\Omega)$ is a non-negative density that realizes the combined moments.
\end{proof}

Let us now consider the higher-order scheme \eqref{eq:dweakform2}. Due to the convexity of the realizable set it suffices to investigate the forward-Euler discretization in time since the used SSP integrator is just a convex combination of such Euler steps.
\begin{thm}[Main Result]
\label{thm:MainResult}
One forward-Euler step of the scheme \eqref{eq:dweakform2} with the DG polynomial $p_K$ of degree $k$ yields realizable cell averages
if $p_K(x_s,y_s)$ is realizable for all $(x_s,y_s)\in S_k^K$ and if the CFL condition
\begin{gather}
\frac{2}{3}\hat{w}_1\left(1-\Delta t\left(\sigma_a + \sigma_s\right)\right) - \frac{\Delta t}{2\abs{K}}l_K^i\geq 0~~\forall i=1,\ldots,3
\label{eqn:cfl}
\end{gather}
holds. Here $\hat{w}_1$ is the quadrature weight of the $N$-point Gauss-Lobatto rule on $[-\frac{1}{2},\frac{1}{2}]$ for the first quadrature point.
\end{thm}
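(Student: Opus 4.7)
The plan is to write $\bar{U}_K^{n+1}$ from one forward-Euler step of the cell-mean equation as a convex combination of realizable vectors, following the Zhang--Shu positivity-preserving framework \cite{Zhang2010} and its one-dimensional moment-system extension in \cite{Olbrant12}. Choosing $v_h\equiv 1$ in \eqref{eq:dweakform2} and advancing by one Euler step gives
\begin{align*}
\bar{U}_K^{n+1}
= \bar{U}_K^n - \frac{\Delta t}{|K|}\sum_{i=1}^3 \int_{e_i} H_{e_i,K}\, d\Gamma
+ \Delta t\, \bar{S}_K.
\end{align*}
Replacing the edge integrals by the Gauss rule \eqref{eq:FluxGauss} and expanding $\bar{U}_K^n$ and $\bar{S}_K$ via the quadrature identity \eqref{eq:Cellmean} turns the right-hand side into a weighted sum over the nodes of $S_k^K$; both expansions are exact because $U_h$ and the (linear) source $S(U_h)$ lie in $P^k(K)\subset P^{2k-1}(K)$, the accuracy class of the rule.

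These terms are then regrouped by quadrature location. At each interior node the contribution reduces to $B_\gamma = (I-\Delta t\,\Sigma)\,U_\gamma^{inner} + \Delta t\, Q$, with $\Sigma=\mathrm{diag}(\sigma_a,\sigma_a+\sigma_s,\sigma_a+\sigma_s)$; since $\sigma_s\ge 0$, the matrix $I-\Delta t\,\Sigma$ contracts the first moment at least as strongly as the density, so as soon as $1-\Delta t(\sigma_a+\sigma_s)\ge 0$ (which is built into \eqref{eqn:cfl}) and $Q$ is itself realizable, $B_\gamma$ stays in the realizable cone. At each boundary node $(x_{i,\beta}^{int},y_{i,\beta}^{int})$ the weight $\tfrac{2}{3}\hat{w}_1 w^\beta$ from \eqref{eq:Cellmean} pairs with the flux weight $\tfrac{\Delta t}{|K|}w^\beta l_K^i$ from \eqref{eq:FluxGauss}: the $\beta$-dependence cancels and, after factoring out the common weight, each boundary quadrature point contributes
\begin{align*}
A_{i,\beta}
= (I-\Delta t\,\Sigma)\,U_{i,\beta}^{int} + \Delta t\,Q
- \lambda_i\, H_{e_i,K}(U_{i,\beta}^{int}, U_{i,\beta}^{ext}),
\qquad
\lambda_i := \frac{\Delta t\, l_K^i}{\tfrac{2}{3}\hat{w}_1\,|K|},
\end{align*}
so that \eqref{eqn:cfl} is exactly the bound on $\lambda_i$ required in the next step.

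The heart of the argument is to show $A_{i,\beta}$ is realizable. Expanding the Lax--Friedrichs flux with $\alpha=1$ and collecting terms,
\begin{align*}
A_{i,\beta}
= \bigl[(1-\lambda_i)I-\Delta t\,\Sigma\bigr]\,U_{i,\beta}^{int}
+ \tfrac{\lambda_i}{2}\bigl(U_{i,\beta}^{int}-\mathcal{F}(U_{i,\beta}^{int})\cdot\mathbf{n}_{e_i,K}\bigr)
+ \tfrac{\lambda_i}{2}\bigl(U_{i,\beta}^{ext}-\mathcal{F}(U_{i,\beta}^{ext})\cdot\mathbf{n}_{e_i,K}\bigr)
+ \Delta t\,Q.
\end{align*}
Lemma \ref{lem:Lemma3}, applied with $\nu=\mathbf{n}_{e_i,K}$ to the interior and exterior traces, gives realizability of the two middle summands. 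The first summand is a diagonal rescaling of a realizable state whose moment entry $1-\lambda_i-\Delta t(\sigma_a+\sigma_s)$ is nonnegative and dominated by the density entry $1-\lambda_i-\Delta t\sigma_a$ once \eqref{eqn:cfl} and $\sigma_s\ge 0$ are used. Summing over all boundary and interior nodes with the (positive) weights of \eqref{eq:Cellmean}, which add to one, produces $\bar{U}_K^{n+1}$ as a convex combination of realizable vectors, and convexity of the realizability set closes the argument. Extension to the full third-order Runge--Kutta step then follows from the SSP property of the Shu--Osher scheme, since each stage is a convex combination of such Euler steps.

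The main obstacle I anticipate is the careful bookkeeping linking the weights $\tfrac{2}{3}\hat{w}_1$ and $w^\beta$ from \eqref{eq:Cellmean} with the edge lengths $l_K^i$ from \eqref{eq:FluxGauss}, so that the CFL constant $\tfrac{2}{3}\hat{w}_1$ in \eqref{eqn:cfl} emerges cleanly; and to verify that distributing the source $\Delta t\,\bar{S}_K$ over the same quadrature, rather than only over the interior nodes, does not strengthen the stated time-step bound. The fact that the boundary weight $\tfrac{2}{3}\hat{w}_1$ is independent of $\beta$ is essential here, since otherwise a different $\lambda_i$ would appear at each Gauss point on the edge and no single CFL like \eqref{eqn:cfl} could cover them uniformly.
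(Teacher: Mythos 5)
Your overall strategy coincides with the paper's: expand the Euler-updated cell mean through the mixed Gauss/Gauss--Lobatto quadrature \eqref{eq:Cellmean}, use Lemma \ref{lem:Lemma3} to certify the $U\mp\mathcal{F}(U)\cdot\nu$ combinations produced by the Lax--Friedrichs flux, and conclude by convexity of the realizable cone (and of the SSP stages). Your moment-level treatment of the source via $\Sigma=\mathrm{diag}(\sigma_a,\sigma_a+\sigma_s,\sigma_a+\sigma_s)$ is an equivalent rephrasing of the paper's kinetic-level construction of $\Xi_K$ and is fine, granted the implicit assumption that $(q^{(0)},q^{(1)})$ is itself realizable.

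The gap is the final assertion that \eqref{eqn:cfl} ``is exactly the bound on $\lambda_i$ required.'' It is not. Your (correct) regrouping
\begin{equation*}
U^{int}_{i,\beta}-\lambda_i H_{e_i,K}
=(1-\lambda_i)\,U^{int}_{i,\beta}
+\tfrac{\lambda_i}{2}\bigl(U^{int}_{i,\beta}-\mathcal{F}(U^{int}_{i,\beta})\cdot\bfn_{e_i,K}\bigr)
+\tfrac{\lambda_i}{2}\bigl(U^{ext}_{i,\beta}-\mathcal{F}(U^{ext}_{i,\beta})\cdot\bfn_{e_i,K}\bigr)
\end{equation*}
needs $1-\lambda_i-\Delta t(\sigma_a+\sigma_s)\geq 0$, and with $\lambda_i=\Delta t\,l_K^i/\bigl(\tfrac{2}{3}\hat{w}_1\abs{K}\bigr)$ this is
$\tfrac{2}{3}\hat{w}_1\bigl(1-\Delta t(\sigma_a+\sigma_s)\bigr)-\tfrac{\Delta t}{\abs{K}}l_K^i\geq 0$: the edge term appears \emph{without} the factor $\tfrac{1}{2}$, so your argument establishes the theorem only under a CFL condition twice as restrictive as \eqref{eqn:cfl}. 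This cannot be repaired by a cleverer per-node split: taking $U^{int}_{i,\beta}=U^{ext}_{i,\beta}$ equal to the moments of the beam $\delta(\Omega-\bfn_{e_i,K})$ gives $H_{e_i,K}=\mathcal{F}(U)\cdot\bfn_{e_i,K}=U$, hence $U-\lambda_i H=(1-\lambda_i)U$, which leaves the realizable cone as soon as $\lambda_i>1$. The factor-of-two discrepancy originates in the interior trace: the Lax--Friedrichs flux contributes $-\tfrac{\Delta t}{2\abs{K}}\int_{e_i}\bigl(U_K+\mathcal{F}(U_K)\cdot\bfn_{e_i,K}\bigr)$, and absorbing the $\mathcal{F}(U_K)\cdot\bfn_{e_i,K}$ part consumes an additional $\lambda_i/2$ of the weight available at that node. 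The paper's own displayed expansion retains only $-\tfrac{\Delta t}{2\abs{K}}\int_{e_i}U_K$ and silently drops the interior $\mathcal{F}(U_K)\cdot\bfn_{e_i,K}$ term, which is precisely how it arrives at the constant in \eqref{eqn:cfl}; your more careful bookkeeping keeps the term and therefore cannot reach that constant. To close the proof you must either state the result with the stronger CFL your decomposition actually delivers, or justify why the omitted interior flux term is harmless --- neither your write-up nor the paper's proof does the latter.
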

\begin{proof}
    For simplicity assume that $\sigma_s=\sigma_a = 0$ and $q = 0$. Furthermore, to ease notation, we drop the symbol $n$ for all quantities at the current time step. After application of the forward-Euler scheme the cell averages in element $K$ satisfy 
\begin{align*}
\overline{U}_K^{n+1} &= \overline{U}_K - \frac{\Delta t}{\abs{K}}\sum\limits_{i=1}^{3}\int_{e_i} H_{e_i,K}\left(U_{K}^{int},U_{K(i)}^{ext}\right)~dx\\
&\stackrel{\eqref{eq:LFFlux}}{=}  \begin{pmatrix}\psizbar_K\\\psiobar_K\end{pmatrix} - \frac{\Delta t}{2\abs{K}}\sum\limits_{i=1}^{3}\int_{e_i} \begin{pmatrix}\psiz_K\\\psio_K\end{pmatrix}~dx + \frac{\Delta t}{2\abs{K}}\sum\limits_{i=1}^{3}\int_{e_i} \begin{pmatrix}\psiz_{K(i)}-\psio_{K(i)}\cdot \bfn_{e_i,K}\\\psio_{K(i)} - \psit_{K(i)}\cdot \bfn_{e_i,K}\end{pmatrix}~dx,
\end{align*}
where $U_K = p_K$ and $U_{K(i)} = p_{K(i)}$ denote the polynomial representations of $U_h$ on element $K$ and on its neighbor $K(i)$ sharing edge $e_i$ with $K$, respectively.

Evaluating the second integral using the Gaussian quadrature as in \eqref{eq:FluxGauss} it is easy to see by using Lemma \ref{lem:Lemma3} that on each edge $e_i$ the quantity
$$
\int_{e_i} \begin{pmatrix}\psiz_{K(i)}-\psio_{K(i)}\cdot \bfn_{e_i,K}\\\psio_{K(i)} - \psit_{K(i)}\cdot \bfn_{e_i,K}\end{pmatrix}~dx 
=  
l_K^i\sum_{\beta=1}^{k+1} w^\beta \begin{pmatrix}\psiz_{K(i),\beta}-\psio_{K(i),\beta}\cdot \bfn_{e_i,K}\\\psio_{K(i),\beta} - \psit_{K(i),\beta}\cdot \bfn_{e_i,K}\end{pmatrix}
$$
is realizable since $\sum w^\beta = 1$. Here, $\psiz_{K(i),\beta}$ denotes the evaluation of the polynomial representation of $\psiz$ on the neighboring element $K(i)$ at the quadrature node associated with $w^\beta$ on $e_i$ and similarly for all other moments.

Now it suffices to show that the first terms in this equation are realizable, too. We rewrite the cell average into a combination of interior points and edge points:
\begin{align*}
\overline{U}_K &\stackrel{\eqref{eq:Cellmean}}{=} \sum\limits_{i=1}^{3}\sum\limits_{\beta=1}^{k+1}\frac{2}{3}w^\beta\hat{w}^1 U_{i,\beta}^{int} + \sum\limits_{\gamma=1}^{L}\tilde{w}^\gamma U_\gamma^{inner},\\
\int_{e_i} U_K~dx  &= \sum\limits_{\beta=1}^{k+1}w^\beta l_K^i U_{i,\beta}^{int}.
\end{align*}
Then
\begin{align*}
\begin{pmatrix}\psizbar_K\\\psiobar_K\end{pmatrix} - \frac{\Delta t}{2\abs{K}}\sum\limits_{i=1}^{3}\int_{e_i} \begin{pmatrix}\psiz_K\\\psio_K\end{pmatrix}~dx = \sum\limits_{\gamma=1}^{L}\tilde{w}^\gamma U_\gamma^{inner} + \sum\limits_{i=1}^{3}\sum\limits_{\beta=1}^{k+1}w^\beta U_{i,\beta}^{int} \left(\frac{2}{3}\hat{w}^1 - \frac{\Delta t}{2\abs{K}}l_K^i\right).
\end{align*}
Under the CFL condition, we immediately get that 
$$
\sum\limits_{\gamma=1}^{L}\tilde{w}^\gamma U_\gamma^{inner} + \sum\limits_{i=1}^{3}\sum\limits_{\beta=1}^{k+1}w^\beta U_{i,\beta}^{int} \left(\frac{2}{3}\hat{w}^1 - \frac{\Delta t}{2\abs{K}}l_K^i\right)
$$
is realizable since the quadrature weights sum up to $1$.\\ 
For absorption and scattering we assume for simplicity that $\sigma_s$ and $\sigma_a$ are constant in $K$. Then we can write the cell averages of the source term as 
\begin{align*}
\overline{S(U_K)} &=  \sum\limits_{i=1}^{3}\sum\limits_{\beta=1}^{k+1}\frac{2}{3}w^\beta\hat{w}^1 \left(\sigma_a U_{i,\beta}^{int} + \sigma_s \begin{pmatrix}
0\\\psio_{i,\beta,int}
\end{pmatrix} \right)\\&+ \sum\limits_{\gamma=1}^{L}\tilde{w}^\gamma \left(\sigma_a U_\gamma^{inner} + \sigma_s \begin{pmatrix}
0\\\psio_{\gamma,{inner}}
\end{pmatrix} \right).
\end{align*}
Thus the updated cell averages can be written as the moments of a non-negative distribution, i.e. $U_K^{n+1} = \int \begin{pmatrix}
1\\\Omega
\end{pmatrix}\Xi_K(\Omega)~d\Omega$ with
\begin{align*}
\Xi_K &=\sum\limits_{\gamma=1}^{L}\tilde{w}^\gamma \left[\left(1-\Delta t\left(\sigma_a + \sigma_s\right)\right)\psi_\gamma^{inner} + \cfrac{\sigma_s}{4\pi}\int \psi_\gamma^{inner}~d\Omega \right] \\
&+ \sum\limits_{i=1}^{3}\sum\limits_{\beta=1}^{k+1}w^\beta \left[\psi_{i,\beta}^{int} \left(\frac{2}{3}\hat{w}^1\left(1-\Delta t\left(\sigma_a + \sigma_s\right)\right) - \frac{\Delta t}{2\abs{K}}l_K^i\right) +\frac{\Delta t}{2\abs{K}}l_K^i {\psi}_{K(i),\beta}+ \cfrac{\sigma_s}{4\pi}\int \psi_{i,\beta}^{int} ~d\Omega\right]
\end{align*}
The quantity $\psi_\gamma^{inner}$ is any nonnegative distribution function realizing $U_{\gamma}^{inner}$, similarly for $\psi_{i,\beta}^{int}$ and ${\psi}_{K(i),\beta}$. It is easy to see that under the given assumptions $\Xi_K^n\geq 0$, which implies that $U_K^{n+1}$ is by definition realizable.
\end{proof}

A similar result can be obtained on the rectangular grid by going through the same lines of the previous proof, replacing the quadrature rules on the triangle with those on the rectangles. More details can be found in \cite{Schneider2016}.

\begin{thm}[Main Result for rectangular grid]
\label{thm:MainResultRect}
One forward-Euler step of the scheme \eqref{eq:dweakform2} with the DG polynomial $p_K$ of degree $k$ yields realizable cell averages
if $p_K(x_s,y_s)$ is realizable for all $(x_s,y_s)\in S_k^K$ and if the CFL condition
$$
\hat{w}_1\left(1-\Delta t\left(\sigma_a + \sigma_s\right)\right) - \frac{\Delta t}{\Delta x}- \frac{\Delta t}{\Delta y}\geq 0
$$
holds. Here $\hat{w}_1$ is the quadrature weight of the $N$-point Gauss-Lobatto rule on $[-\frac{1}{2},\frac{1}{2}]$ for the first quadrature point.
\end{thm}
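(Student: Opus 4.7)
The plan is to mimic the argument used for Theorem~\ref{thm:MainResult} on triangles, with the three degenerate projections and the 3-point Gauss--Lobatto rule replaced by the tensor product of 1D Gauss--Lobatto and Gauss rules on the rectangle $K=[x_L,x_R]\times[y_B,y_T]$ with $\Delta x=x_R-x_L$ and $\Delta y=y_T-y_B$. As before it suffices, by convexity of the realizability cone and the SSP property of the Runge--Kutta scheme, to handle a single forward-Euler step, and initially we take $\sigma_a=\sigma_s=0$ and $q=0$.

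First I would write out the forward Euler update
\begin{equation*}
\overline{U}_K^{n+1} = \overline{U}_K - \frac{\Delta t}{|K|}\sum_{i=1}^{4}\int_{e_i} H_{e_i,K}(U_K^{int},U_{K(i)}^{ext})\,dx
\end{equation*}
and split the Lax--Friedrichs flux into a ``self'' contribution, involving only $U_K$, and a ``neighbor'' contribution, involving only $U_{K(i)}$. Using Lemma~\ref{lem:Lemma3} together with the fact that each 1D Gauss rule has positive weights summing to one, the neighbor part on each edge $e_i$ evaluates as a convex combination of realizable quantities $(\psiz_{K(i),\beta}\pm\psio_{K(i),\beta}\cdot\bfn,\psio_{K(i),\beta}\pm\psit_{K(i),\beta}\cdot\bfn)$ and is therefore realizable.

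The core step is the same decomposition trick as in the triangular case. On the rectangle, I would apply the tensor product of a Gauss--Lobatto rule (with weights $\hat w^\alpha$ and nodes $u^\alpha$, $\alpha=1,\ldots,N$) and a Gauss rule (with weights $w^\beta$) to express
\begin{equation*}
\overline{U}_K = \sum_{i=1}^{4}\hat w^{1}\,w^\beta\,U_{i,\beta}^{int} + \sum_{\gamma}\tilde w^{\gamma}\,U_{\gamma}^{inner},
\end{equation*}
where the first sum collects the boundary Gauss--Lobatto--Gauss nodes on the four edges (each carrying the endpoint weight $\hat w^1$ from the perpendicular Gauss--Lobatto rule times the parallel Gauss weight) and the second sum collects all strictly interior nodes. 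For the ``self'' flux contribution, a 1D $(k{+}1)$-point Gauss rule on each edge gives
\begin{equation*}
\int_{e_i}U_K\,dx = \ell_{e_i}\sum_\beta w^\beta U_{i,\beta}^{int},
\end{equation*}
with $\ell_{e_i}\in\{\Delta x,\Delta y\}$. Substituting both, the new cell average becomes
\begin{equation*}
\overline{U}_K^{n+1}=\sum_\gamma \tilde w^{\gamma}\,U_{\gamma}^{inner}+\sum_{i=1}^{4}\sum_\beta w^\beta\Bigl(\hat w^{1}-\tfrac{\Delta t\,\ell_{e_i}}{2|K|}\Bigr)U_{i,\beta}^{int}+\text{(neighbor part)},
\end{equation*}
and since $|K|=\Delta x\,\Delta y$, the two $x$-edges contribute $\tfrac{\Delta t}{2\Delta y}$ and the two $y$-edges contribute $\tfrac{\Delta t}{2\Delta x}$. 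Adding the matching contributions from the opposite edges yields exactly $\tfrac{\Delta t}{\Delta x}$ and $\tfrac{\Delta t}{\Delta y}$, which is why both terms appear in the CFL bound.

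The CFL condition then guarantees that each coefficient $\hat w^1-\tfrac{\Delta t\,\ell_{e_i}}{2|K|}$ is non-negative when combined with the opposite edge, so $\overline{U}_K^{n+1}$ is a convex combination of realizable values $U_{\gamma}^{inner}$, $U_{i,\beta}^{int}$ and the neighbor averages, all of which are realizable by hypothesis and by the Lemma~\ref{lem:Lemma3} step above; hence it is realizable. Finally, as in the triangular proof, non-zero $\sigma_a,\sigma_s$ are handled by representing the updated average as the moments of the non-negative distribution
\begin{equation*}
\Xi_K=(1-\Delta t(\sigma_a+\sigma_s))\,(\text{above convex combination of underlying densities})+\frac{\sigma_s\Delta t}{4\pi}(\text{isotropized densities}),
\end{equation*}
which is non-negative precisely under the stated CFL condition. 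The main obstacle, as on triangles, is matching the edge-integral weights to the boundary-node weights of the volume quadrature; the rectangular tensor structure makes this clean, but one must be careful to use the same Gauss rule in the parallel direction both for the cell-mean decomposition and for the flux quadrature so that the coefficients line up term by term.
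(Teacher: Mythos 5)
Your overall strategy is exactly what the paper intends: its ``proof'' of Theorem~\ref{thm:MainResultRect} is a single sentence deferring to the triangular argument with the quadrature rules swapped, and your sketch carries out precisely that program (split the Lax--Friedrichs flux, handle the neighbor part with Lemma~\ref{lem:Lemma3}, absorb the self part into a positive-weight decomposition of the cell mean, then treat $\sigma_a,\sigma_s$ via a nonnegative density $\Xi_K$). However, there is a concrete error in the key decomposition step. On a rectangle, a single tensor rule (Gauss--Lobatto in $x$ times Gauss in $y$) places boundary nodes only on the \emph{left and right} edges; the top and bottom edges require the transposed rule (Gauss in $x$ times Gauss--Lobatto in $y$). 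The cell mean must therefore be written as a convex combination $\mu\,[\mathrm{GL}_x\times \mathrm{G}_y] + (1-\mu)\,[\mathrm{G}_x\times \mathrm{GL}_y]$, so the left/right edge nodes carry weight $\mu\,\hat w^1 w^\beta$ and the top/bottom nodes carry $(1-\mu)\,\hat w^1 w^\beta$. Your formula assigns weight $\hat w^1 w^\beta$ to all four edges simultaneously, which is not a valid representation of $\overline{U}_K$ (for the two-point Gauss--Lobatto rule the boundary weights alone would already sum to $2$), and the subsequent statement that a coefficient is ``non-negative when combined with the opposite edge'' does not repair this: opposite edges carry \emph{different} point values $U^{int}_{i,\beta}$, so each coefficient must be non-negative individually for the convex-combination argument to apply.

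The fix is standard and preserves your conclusion: choose the mixing parameter, e.g.\ $\mu = \frac{1/\Delta x}{1/\Delta x + 1/\Delta y}$, so that the left/right coefficients $w^\beta\bigl(\mu\hat w^1 - \frac{\Delta t}{2\Delta x}\bigr)$ and top/bottom coefficients $w^\beta\bigl((1-\mu)\hat w^1 - \frac{\Delta t}{2\Delta y}\bigr)$ are simultaneously non-negative whenever $\hat w^1 \geq \frac{\Delta t}{2}\bigl(\frac{1}{\Delta x}+\frac{1}{\Delta y}\bigr)$; the CFL condition stated in the theorem (which lacks the factor $\tfrac12$ and is therefore only a sufficient, non-sharp condition) then certainly implies realizability, and the source-term modification goes through as you describe. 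What your sketch is missing, then, is not an idea but the mixing parameter $\mu$; as written, the decomposition on which the whole non-negativity argument rests does not exist.
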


All that remains is to ensure that the assumptions of the two theorems are satisfied. This can be achieved by a simple scaling limiter. A similar limiter has been derived in other contexts, e.g. shallow water \cite{Xing2013}, Euler equations \cite{Zhang2010} and gas dynamics \cite{Zhang2012a}. The basic idea is to dampen the higher-order parts of the DG polynomial $p_K(x,y)$ until at all quadrature nodes $(x_s,y_s)\in S_k^K$ the evaluation $p_K(x_s,y_s)$ is realizable. It has been reported in \cite{Zhang2012a} that this limiter, due to its simplicity, can destroy the formal accuracy of the scheme in certain non-generic situations. This has been investigated in a one-dimensional setting for minimum-entropy models in \cite{Schneider2015a}. There, it has been shown that close to the realizability boundary the convergence order can drop.  However, we still expect order preservation in most practical cases.

Writing the limited polynomial as 
\begin{align*}
p_K^\theta(x,y) := \theta \overline{p_K} + (1-\theta)p_K(x,y) \stackrel{\eqref{eqn:solution_form}}{=} \theta \alpha_1^U\varphi_1 +  (1-\theta)\sum\limits_{i=1}^{N_k} \alpha_i^U\varphi_i = \alpha_1^U\varphi_1 + (1-\theta)\sum\limits_{i=2}^{N_k} \alpha_i^U\varphi_i
\end{align*}
it becomes clear that the cell average of $p_K$ is preserved and only higher-order coefficients are damped by the factor $(1-\theta)$ where $\theta$ is chosen as the minimal value in the set
\begin{align}
\left\{\tilde{\theta}\in[0,1]~|~p_K^{\tilde\theta}(x_s,y_s)~\text{ is realizable for all } (x_s,y_s)\in S_k^K \right\}.
\label{eq:RLproblem}
\end{align}
Such a $\theta$ always exist under the assumption that $\overline{p_K}$ is realizable as a consequence of the convexity of the realizable set. To calculate $\theta_s$ it suffices to find the intersections of the ray segments
\begin{align}
\label{eq:RaySegment}
\left\{\theta \bar U_K +(1-\theta) p_K(x_s,y_s)~|~\theta\in[0,1]\right\},~(x_s,y_s)\in S_k^K
\end{align} 
with the boundary of the realizable set
\begin{align}
\label{eq:BoundaryReal}
\left\{\left(\psi^{(0)},\psi^{(1)}\right)\in\mathbb{R}^3~|~ \psi^{(0)} = \left\lvert \psi^{(1)}\right\rvert \right\} = \left\{\left(\psi^{(0)},\psi^{(1)}\right)\in\mathbb{R}^3~|~ \psi^{(0)}\geq 0, \left(\psi^{(0)}\right)^2 = \left\lvert\psi^{(1)}\right\rvert^2 \right\}.
\end{align}
Using the condition $\left(\psi^{(0)}\right)^2 = \left\lvert\psi^{(1)}\right\rvert^2$ from \eqref{eq:BoundaryReal} and the parameterization of the ray segment \eqref{eq:RaySegment} we get
\begin{align*}
\left(\theta\bar\psi^{(0)}+(1-\theta)\psi^{(0)}\right)^2 \stackrel{!}{=} \left(\theta\bar\psi^{(1)}_y+(1-\theta)\psi^{(1)}\right)^2+\left(\theta\bar\psi^{(1)}_y+(1-\theta)\psi^{(1)}\right)^2.
\end{align*} 
Rearranging this expression, collecting the coefficients in powers of $\theta$ yields the polynomial equation
\begin{align*}
0\stackrel{!}{=} r(\theta) &= a\theta^2+b\theta+c,\\
a &= -\left(\bar{\psi}^{(0)}-{\psi}^{(0)}\right)^2 + \left(\bar{\psi}^{(1)}_x -{\psi}^{(1)}_x\right)^2 + \left(\bar{\psi}^{(1)}_y -{\psi}^{(1)}_y\right)^2\\
b &= 2\cdot\left(\left({\psi}^{(0)}\right)^2 - \bar{\psi}^{(0)}\cdot{\psi}^{(0)} - \left({\psi}^{(1)}_x\right)^2 + \bar{\psi}^{(1)}_x\cdot{\psi}^{(1)}_x - \left({\psi}^{(1)}_y\right)^2 + \bar{\psi}^{(1)}_y\cdot{\psi}^{(1)}_y\right)\\
c &= - \left({\psi}^{(0)}\right)^2 + \left({\psi}^{(1)}_x\right)^2 + \left({\psi}^{(1)}_y\right)^2.
\end{align*}
Since $r(\theta)$ is a quadratic polynomial, it possibly has two zeros in the set $[0,1]$. Due to convexity, the largest zero in this set is the correct choice for $\theta_s$, since it corresponds to the intersection with the $\psi^{(0)}\geq 0$ part of the double cone $|\psi^{(1)}|^2\leq (\psi^{(0)})^2$, while the smaller zero (which is by definition farther away from $\bar U_K$) corresponds to the intersection with the  $\psi^{(0)}\leq 0$ part of the double cone. Finally, set $\theta := \max_s \theta_s$.

Note that in each intermediate step of the Runge-Kutta scheme the realizability limiter is always applied after the slope limiting. Applying the slope limiter can destroy the realizability again \cite{Schneider2016}.

\section{Numerical Results}
\label{sec:numerical_results}
In this section, we compare the performance of the RKDG schemes on unstructured triangular meshes
and uniform rectangular meshes for various combinations of limiters. Specifically we consider the limiter combinations given in Table~\ref{tab:LimiterCombinations}.
The time step is taken as indicated by~\eqref{eqn:cfl}.
\begin{table}[h]
\centering
\begin{tabular}{l l}
Abbr. & Explanation\\
\midrule
SL$M$$\square/\triangle$ & Slope limiter in primitive variables with the constant $M$ specified\\
CL$M$$\square/\triangle$ & Slope limiter in characteristic variables with the constant $M$ specified\\
SRL$M$$\square/\triangle$ & Slope limiter in primitive variables + realizability limiter\\
CRL$M$$\square/\triangle$ & Slope limiter in characteristic variables + realizability limiter\\
\end{tabular}
\caption{Abbreviation of limiter combinations. Setting $M=\infty$ is equivalent to disabling the slope limiter. $\square$ and $\triangle$ correspond to the rectangular and triangular meshes, respectively.}
\label{tab:LimiterCombinations}
\end{table}
To ensure that our research is reproducible, the codes used in this publication can be found on \textit{GitHub}, see \cite{Chidyagwai2016}.

In the numerical results that follow we denote the mesh size on both unstructured triangular meshes
and uniform rectangular meshes by $h$. All plots show either the zeroth moment $\psi^{(0)}$ or the norm of the first normalized moment $\vert\phi^{(1)}\vert = \vert\frac{\psi^{(1)}}{\psi^{(0)}}\vert$. The loss of realizability is indicated in the latter by the use of black ($\vert\phi^{(1)}\vert>1$) and white ($\psi^{(0)}<0$) colors. 

Before we describe the test cases and the results, we detail some of the steps in the algorithm, and the computation of the errors.

{\bfseries Fixing a DG solution which is not realizable.}
In test cases where the realizability limiter is not used (SL$M$/CL$M$) the solution may violate the realizability condition and thus the flux function cannot 
be evaluated. We therefore compute a modified numerical solution which we use only to compute the flux, i.e.\ the actual solution in each cell remains the same. 
We modify the values of the numerical solution locally at each Gauss point
as follows: let $\epsilon$ be a specified numerical tolerance, if $\psi^{(0)} < \epsilon$ then 
we set $\psi^{(0)} = \epsilon$. In addition, if the normalized first moment 
$$
f :=  |\frac{\psi^{(1)}}{\psi^{(0)}}| > (1 - \epsilon),
$$ 
we modify the values of first moment as follows: $\displaystyle \psi^{(1)}_x = \dfrac{\psi^{1}_x}{f}(1-\epsilon)$ and 
$\displaystyle \psi^{(1)}_y = \dfrac{\psi^{1}_y}{f}(1-\epsilon)$. We choose a tolerance of $\epsilon = 10^{-12}$.

{\bfseries Transformation to characteristic variables.}

In the case of the triangular meshes the matrices $\mathcal{R}$ and $\mathcal{R}^{-1}$ from \eqref{eq:trafo} are computed using the 
GSL linear algebra package \cite{Gough_GSL} while they can be precomputed analytically in case of the rectangular grid. 
The transformation to and from the characteristic variables is achieved by left multiplying the variables by $\mathcal{R}^{-1}$ and $\mathcal{R}$, respectively. 

In case that we need the transformation to characteristic variables and do not use the realizability limiter (i.e. CL$M$), we use the same realizability fix as in the flux function, but for the computation of the transformation matrices only. This ensures that transformation matrices are invertible due to the strict hyperbolicity of the moment system in the interior of the realizable set. The transformation matrix has a condition number of roughly $2/\epsilon$, where $\epsilon$ is the distance of the (cell mean of the) normalized first moment to the realizability boundary. Note that this implies that close to the realizability boundary, significant round-off errors may occur.
However, we have performed numerical experiments with the linesource test case with $\epsilon=10^{-5}, 10^{-12}, 10^{-14}$, and observed no significant effect of the cutoff parameter.

{\bfseries Comparison to a reference solution.}
We note that for a general $f>0$ we have that $$\lVert\log_{10}(f)\rVert_{H^1(K)} = \lVert\log_{10}(f)\rVert_{L^2(K)}+\lVert\frac{1}{\log_{10}f}\nabla_xf\rVert_{L^2(K)}.$$ 
We can therefore use the evaluation of the DG polynomial and its first-order derivatives in every element $K$ to calculate its logarithmic Sobolev representation locally.
 
The reference solution is computed using a first-order Lax-Friedrichs finite-volume scheme on an equidistant rectangular grid with $8192\times 8192$ grid points. The gradients for the reference solution are obtained using centered finite difference formulas. Finally, the argument in the norm is evaluated on every point of the reference grid and integrated using a quadrature with equal weights at exactly those reference points (rectangle rule).

{\bfseries Choosing the limiter parameter $M$.}
The solutions of the benchmark test cases that follow are plotted on a logarithmic scale. We thus choose the value of $M$ in \eqref{eq:dg_slope_limiter}, using the logarithmic 
Sobolev norm error, as
\begin{align}
\label{eq:logSob}
M = \operatorname{arg min}\limits_{M\geq 0} \lVert \log_{10}\left(\psi^{(0)}_{M}\right)-\log_{10}\left(\psi^{(0)}_{ref}\right)\rVert_{H^1(X)}.
\end{align}
Since every evaluation of \eqref{eq:logSob} requires to solve the full system of equations, finding the true minimum using e.g. a gradient method is not feasible. We therefore restricted ourself to the discrete set of (almost) logarithmically spaced values $M\in\mathcal{M}:=\{0.1,0.2,0.5,1,2,10,22,46,100,150\}$.

\subsection{Failure of the realizability limiter}
We want to extend the investigation in \cite{Schneider2015a} to our two-dimensional setup. 
Define the two moment vectors 
\begin{align*}
U_0 &= (1-\xi)\begin{pmatrix}
1\\1\\0
\end{pmatrix} + \xi\begin{pmatrix}
1\\0\\0
\end{pmatrix},&
U_1 = 10^{-6}\cdot\left((1-\xi)\begin{pmatrix}
1\\0\\1
\end{pmatrix} + \xi\begin{pmatrix}
1\\0\\0
\end{pmatrix}\right).
\end{align*}
Both of them satisfy $\lvert \frac{\psio}{\psiz}\rvert = 1-\xi$, i.e. the distance to the realizability boundary can be controlled by the parameter $\xi$. Due to the convexity of the realizable set every convex combination of $U_0$ and $U_1$ will be realizable as well. We can thus define the vector of functions
\begin{align}
\label{eq:u-limit-test}
U(x,y) &= (1-\lambda(x,y))U_0 + \lambda(x,y)U_1,\\
\lambda(x,y) &= \cfrac{\cos\left(2\left(x+y\right)\pi\right)+1}{2}\in[0,1],\nonumber
\end{align}
which is realizable for all $(x,y)\in[0,1]^2$.

Now we discretize the domain, for simplicity, using the structured rectangular mesh with equidistant step sizes. We project $U(x,y)$ onto the DG basis of degree $k$, $k\in\{0,1,2\}$ and apply the realizability-preserving limiter from Section~\ref{sec:limiter}.
Then we refine the grid and calculate $L_1$- and $L_\infty$-errors for the zeroth moment $\psiz$ as 
\begin{align*}
E^1_h = \int_{0}^{1}\int_{0}^{1} \abs{\psiz_a(x,y)-\psiz_h(x,y)}~dxdy, \quad E^\infty_h = \max\limits_{(x,y)\in[0,1]^2}\abs{\psiz_a(x,y)-\psiz_h(x,y)},
\end{align*}
where $\psiz_a$ denotes the exact zeroth moment of $U$ and $\psiz_h$ its limited DG polynomial.
In practice, the integral in $E^1_h$ is approximated by a high-order tensor Gaussian quadrature on every cell and the $\max$ in $E^\infty_h$ is taken over the same quadrature nodes.

The observed convergence order $\nu$ is defined by
\begin{equation}
 \frac{E^p_{h1}}{E^p_{h2}} = \left( \frac{h_1}{h_2} \right)^\nu
\label{eq:conv-order}
\end{equation}
where for $i \in \{1, 2\}$, $E^p_{hi}$ is the error $E^p_h$ for the
numerical solution using cell size $h_i$, for $p \in \{1, \infty\}$.

We found that taking $\xi \in [0, 10^{-4}]$ places the moment curve
$U(x,y)$ close enough to the boundary of realizability that the realizability
limiter was active for every number of cells we considered.
In Table~\ref{tab:limiter-test-4} we show convergence rates for 
$\xi=10^{-4}$. These results show the expected convergence order $k+1$ in all cases.
In this table we include the column $\theta_{\max}$, which gives the
maximum value of $\theta$ from the realizability limiter over all spatial
cells.
That $\theta_{\max}$ is nonzero in each row indicates that the
realizability limiter was active for every reconstruction.

\begin{table}
\centering
\begin{tabular}{r r@{.}l c r@{.}l c r@{.}l r@{.}l c r@{.}l c r@{.}l}

 & \multicolumn{8}{c}{$k=1$} & \multicolumn{8}{c}{$k=2$}\\
\cmidrule(r){2-9} \cmidrule(r){10-17}
$1/h$ & \multicolumn{2}{c}{$E^1_h$} & $\nu$
 & \multicolumn{2}{c}{$E^\infty_h$} & $\nu$
 & \multicolumn{2}{c}{$\theta_{\max}$}
 & \multicolumn{2}{c}{$E^1_h$} & $\nu$
 & \multicolumn{2}{c}{$E^\infty_h$} & $\nu$
 & \multicolumn{2}{c}{$\theta_{\max}$} \\ \midrule
 
 5 & 5 & 535e-02 & --- & 2 & 790e-01 & --- & 3 & 210e-01& 1 & 787e-02 & --- & 6 & 567e-02 & --- & 1 & 783e-01\\
10 & 1 & 252e-02 & 2.1 & 1 & 269e-01 & 1.1 & 3 & 960e-01& 1 & 483e-03 & 3.6 & 1 & 153e-02 & 2.5 & 5 & 305e-02\\
20 & 2 & 774e-03 & 2.2 & 3 & 831e-02 & 1.7 & 4 & 117e-01& 1 & 382e-04 & 3.4 & 1 & 495e-03 & 2.9 & 1 & 391e-02\\
40 & 6 & 500e-04 & 2.1 & 1 & 001e-02 & 1.9 & 4 & 154e-01& 1 & 551e-05 & 3.2 & 1 & 878e-04 & 3.0 & 3 & 519e-03\\
80 & 1 & 568e-04 & 2.1 & 2 & 531e-03 & 2.0 & 4 & 164e-01& 1 & 881e-06 & 3.0 & 2 & 350e-05 & 3.0 & 8 & 824e-04\\
160 & 3 & 847e-05 & 2.0 & 6 & 345e-04 & 2.0 & 4 & 166e-01& 2 & 332e-07 & 3.0 & 2 & 938e-06 & 3.0 & 2 & 205e-04\\
320 & 9 & 526e-06 & 2.0 & 1 & 587e-04 & 2.0 & 4 & 166e-01& 2 & 909e-08 & 3.0 & 3 & 673e-07 & 3.0 & 5 & 431e-05\\

\end{tabular}
\caption{$L^1$- and $L^\infty$-errors and observed convergence order $\nu$
for the first component of the realizability-limited, piece-wise linear and
quadratic reconstructions of $U(x,y)$ from \eqref{eq:u-limit-test} with
$\xi = 10^{-4}$.}
\label{tab:limiter-test-4}
\end{table}

Pushing $U$ closer to the realizability boundary ($\xi\leq 10^{-5}$) degrades the convergence order of the piecewise-linear reconstruction, verifying the mentioned convergence problems. This is demonstrated in Table~\ref{tab:limiter-test-0} for the extreme case $\xi = 0$, i.e. moments are placed on the realizability boundary. Surprisingly, the third-order approximation has the full convergence order for all tested $\xi\in[0,10^{-3}]$.
Note that this is different to the results in \cite{Schneider2015a} where the third-order approximation degrades to second order. The reason for this is still unclear but might be due to the high non-linearity of the $M_3$ model used in \cite{Schneider2015a}. We checked several variants of the test (by choosing different vectors on the unit sphere instead of $(1,0,0)^T$ and modifying the weight $10^{-6}$ in front of $U_1$) and all showed the same behavior. 

\begin{table}
\centering
\begin{tabular}{r r@{.}l c r@{.}l c r@{.}l r@{.}l c r@{.}l c r@{.}l}

 & \multicolumn{8}{c}{$k=1$} & \multicolumn{8}{c}{$k=2$}\\
\cmidrule(r){2-9} \cmidrule(r){10-17}
$1/h$ & \multicolumn{2}{c}{$E^1_h$} & $\nu$
 & \multicolumn{2}{c}{$E^\infty_h$} & $\nu$
 & \multicolumn{2}{c}{$\theta_{\max}$}
 & \multicolumn{2}{c}{$E^1_h$} & $\nu$
 & \multicolumn{2}{c}{$E^\infty_h$} & $\nu$
 & \multicolumn{2}{c}{$\theta_{\max}$} \\ \midrule
 
 5 & 6 & 716e-02 & --- & 3 & 594e-01 & --- & 5 & 603e-01& 1 & 787e-02 & --- & 6 & 567e-02 & --- & 1 & 783e-01\\
10 & 1 & 532e-02 & 2.1 & 1 & 269e-01 & 1.5 & 3 & 960e-01& 2 & 033e-03 & 3.1 & 1 & 153e-02 & 2.5 & 5 & 305e-02\\
20 & 3 & 143e-03 & 2.3 & 3 & 831e-02 & 1.7 & 4 & 117e-01& 1 & 577e-04 & 3.7 & 1 & 495e-03 & 2.9 & 1 & 391e-02\\
40 & 6 & 966e-04 & 2.2 & 1 & 001e-02 & 1.9 & 4 & 154e-01& 1 & 614e-05 & 3.3 & 1 & 878e-04 & 3.0 & 3 & 519e-03\\
80 & 1 & 626e-04 & 2.1 & 2 & 531e-03 & 2.0 & 4 & 164e-01& 1 & 901e-06 & 3.1 & 2 & 350e-05 & 3.0 & 8 & 829e-04\\
160 & 3 & 920e-05 & 2.1 & 6 & 345e-04 & 2.0 & 4 & 166e-01& 2 & 338e-07 & 3.0 & 2 & 938e-06 & 3.0 & 2 & 217e-04\\
320 & 9 & 625e-06 & 2.0 & 3 & 008e-04 & 1.1 & 4 & 167e-01& 2 & 912e-08 & 3.0 & 3 & 673e-07 & 3.0 & 8 & 294e-05\\
\end{tabular}
\caption{$L^1$- and $L^\infty$-errors and observed convergence order $\nu$
for the first component of the realizability-limited, piece-wise linear and
quadratic reconstructions of $U(x,y)$ from \eqref{eq:u-limit-test} with
$\xi = 0$.}
\label{tab:limiter-test-0}
\end{table}

Full convergence tests for this scheme in one and two dimensions can be found in \cite{Schneider2015a,Schneider2016}.

%%%%%%%%%%%%%%%%%%%%%%%%
%Linesource
\label{sec:results}
\subsection{Line Source}
%\comment{M = 22}
Our first benchmark is the line-source test as proposed in \cite{Brunner2005b}. It is a Green's function problem, where a pulse of particles is emitted from a line in an infinite medium. This corresponds to an initial condition of the form $\psi^{(0)}(x,y) = c\cdot\delta(x,y)$, whose lack of regularity makes this problem especially hard to tackle for the method of moments, including the $M_1$ model, see e.g. \cite{Garrett2014}. The parameters of this test are as follows:
\begin{itemize}
\item Domain: $[-0.5,0.5]^2$ with $h =0.004$
\item Time $T=0.45$
\item Parameters: $\sigma_a=\sigma_s=0$, $q^{(0)}=0$, $q^{(1)}=0$
\item Initial condition: smoothed Dirac $\psi^{(0)}(x,y) = \max(\exp(-10\frac{x^2+y^2}{\sigma^2}),10^{-4})$ with $\sigma=0.02$, $\psi^{(1)}(x,y)=0$
\item Boundary conditions are Dirichlet conditions consistent with the initial conditions. Because the signal does not reach the boundary, these conditions do not influence the result.
\end{itemize}

Due to the rotational invariance of the initial condition and the flux of the $M_1$ model the exact solution for the line-source problem has rotational symmetry.

\begin{figure}[h]
\def\localpath{./Images/Linesource/}
\centering
\externaltikz{Linesource4}{\input{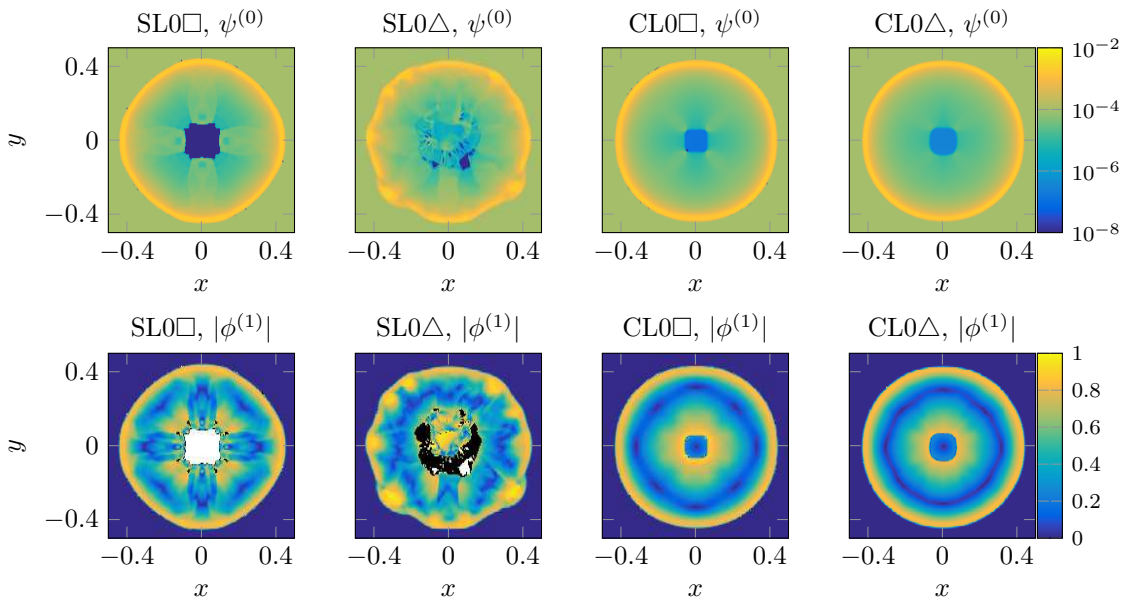}}
\caption{Line source: comparison of primitive and characteristic slope limiter.}
\label{fig:linesource4}
\end{figure}

This symmetry is easily destroyed by the application of the slope limiter in primitive variables (SL$0$$\square/\triangle$, Figure~\ref{fig:linesource4}). 
It is visible for both mesh types that the solution is strongly asymmetric in density as well as normalized velocity. Furthermore, realizability is lost in parts of the domain (black and white spots).

Applying the characteristic limiter (CL$0$$\square/\triangle$) yields much better results: the solution is much more symmetric and as shown in 
Tables~\ref{tab:linesource_rect_table} and~\ref{tab:linesource_tri_table}, it is even realizable in the mean at the final time step. 
However, the realizability in the mean does not hold at every time step and in addition there are a Gauss points on both meshes at which the solution is unrealizable. 
We note that the fact that primitive-variable limiting (SL$0$$\square/\triangle$) can result in non-physical oscillations is a well-known phenomenon 
and has been reported in case of the $M_1$ model in one dimension in \cite{Olbrant12}. 

\begin{figure}[h]
\def\localpath{./Images/Linesource/}
\centering
\externaltikz{Linesource4b}{\input{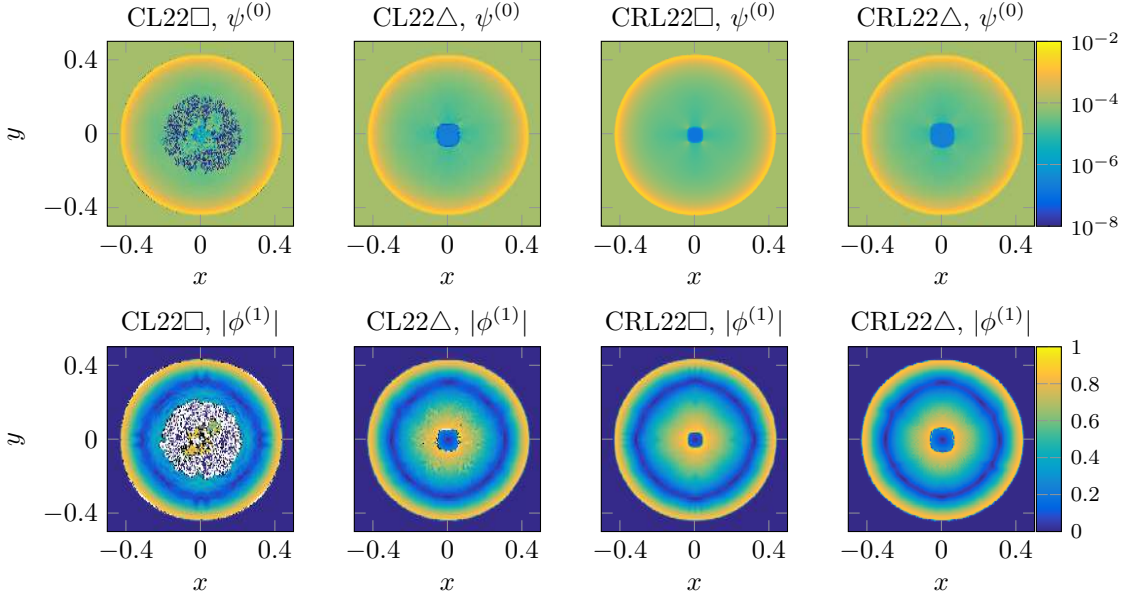}}
\caption{Line source: comparison of the characteristic limiter with and without realizability limiter.}
\label{fig:linesource4b}
\end{figure}

Unfortunately, choosing $M=0$ in the minmod limiter results in a flattening of the solution at smooth extrema and first-order convergence in the $L_\infty$-norm \cite{CockburnShuPk}. 
Setting $M=22$ results in sharper solutions in the sense that the width of the wave front is decreasing (Figure \ref{fig:linesource4b}). However, this comes at the price of a realizability loss resulting in a complete destruction of the solution (see e.g. CL$22\square$ in Figure~\ref{fig:linesource4b}). Similar results occur with the primitive-variable limiter (not shown).
This can be fixed by further application of the realizability limiter (CRL$22\square/\triangle$). The now-realizable solutions show the same sharpness as with CL$22\square/\triangle$. 
Furthermore, good rotational symmetry is still visible.

\begin{figure}[!h]
\def\localpath{./Images/Linesource/}
\centering
\externaltikz{Linesource2}{\input{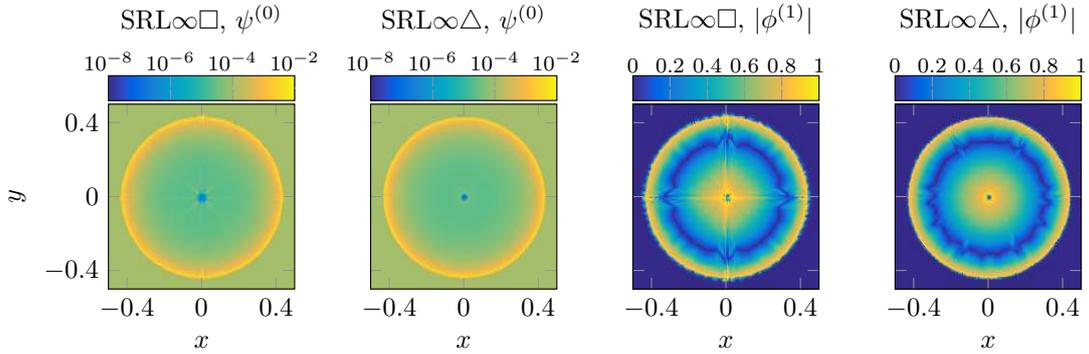}}
\caption{Line source: realizability limiter only.}
\label{fig:linesource2}
\end{figure}

A natural question arising is if the realizability limiter on its own is enough to provide a good solution. It has been shown in \cite{Olbrant12} that even in one dimension spurious oscillations can occur. Similarly, as shown in Figure \ref{fig:linesource2}, symmetry is not preserved and slight oscillations are visible, especially in the normalized velocity. Although the solution is everywhere realizable, a slope limiter is strictly necessary to dampen the Gibbs phenomenon. Similar observations have been made in case of the Euler equations in \cite{Zhang2010,Wang2012}.

\begin{figure}[h]
\def\localpath{./Images/Linesource/}
\centering
\externaltikz{Linesource4c}{\input{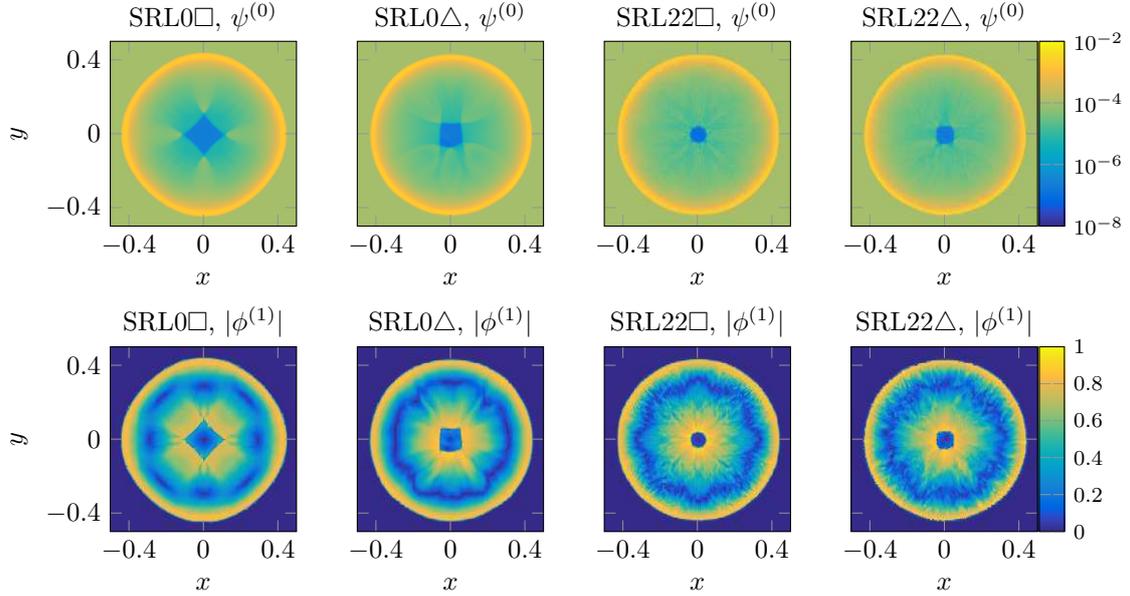}}
\caption{Line source: primitive variable and realizability limiter.}
\label{fig:linesource4c}
\end{figure}

As we have shown, using the characteristic and realizability limiter together yields good results. However, the change to characteristic fields is expensive and the transformation matrices become arbitrarily poorly conditioned close to the realizability boundary \cite{AllHau12}, yielding inaccurate results. Unfortunately, Figure \ref{fig:linesource4c} shows that the combination of limiting in the 
primitive-variable and realizability limiter is not enough to obtain good results. It appears that these solutions are even worse than the realizability-limiter-only solutions shown in Figure~\ref{fig:linesource2}.

    Our numerical tests show that in general the characteristic limiter combined with the realizability limiter (CRLM) is superior
    to the combination of limiting in the components and the realizability limiter (SRLM). However, the differences between the two limiting
    strategies are most dramatic for the linesouce test case. In the rest of the problems we consider, the solutions both look good
    and are comparable.

\begin{table}
\centering
\begin{tabular}{l|| r@{.}l r@{.}l r@{.}l r@{.}l r@{.}l }
& \multicolumn{2}{c}{SL$\infty\square$}& \multicolumn{2}{c}{SL$0\square$}& \multicolumn{2}{c}{SL$22\square$} & \multicolumn{2}{c}{CL$0\square$}& \multicolumn{2}{c}{CL$22\square$}\\
\cmidrule(r){1-1}\cmidrule(r){2-3} \cmidrule(r){4-5} \cmidrule(r){6-7} \cmidrule(r){8-9} \cmidrule(r){10-11} \
$\max\limits_t$ GP & 11 & 49\%& 8 & 19\%& 13 & 01\%& 0 & 84\%& 7 & 69\%\\
$\max\limits_t$ CM & 10 & 09\%& 8 & 08\%& 10 & 59\%& 0 & 81\%& 5 & 87\%\\
GP$(t_{f})$ & 11 & 49\%& 3 & 26\%& 13 & 01\%& 0 & 06\%& 7 & 69\%\\
CM$(t_{f})$ & 10 & 09\%& 3 & 15\%& 10 & 59\%& 0 & 00\%& 5 & 87\%\\
\end{tabular}
\caption{Line source: percentage of non-realizable cells and quadrature points on rectangular mesh.}
\label{tab:linesource_rect_table}
\end{table}
\begin{table}
\centering
\begin{tabular}{l|| r@{.}l r@{.}l r@{.}l r@{.}l r@{.}l }
& \multicolumn{2}{c}{SL$\infty\triangle$}& \multicolumn{2}{c}{SL$0\triangle$}& \multicolumn{2}{c}{SL$22\triangle$} & \multicolumn{2}{c}{CL$0\triangle$}& \multicolumn{2}{c}{CL$22\triangle$}\\
\cmidrule(r){1-1}\cmidrule(r){2-3} \cmidrule(r){4-5} \cmidrule(r){6-7} \cmidrule(r){8-9} \cmidrule(r){10-11} \
$\max\limits_t$ GP & 81 & 01\%& 5 & 01\%& 30 & 10\%& 0 & 03\%& 0 & 33\%\\
$\max\limits_t$ CM & 80 & 33\% & 5 & 03\%& 25 & 12\%& 0 & 00\%& 0 & 17\%\\
GP$(t_{f})$ & 81 & 01\%& 5 & 05\%& 30 & 10\% & 0& 01\%& 0& 20\%\\
CM$(t_{f})$ & 80 & 33\% & 5 & 00\%& 25 & 12\%& 0 & 00\%& 0 & 08\%\\
\end{tabular}
\caption{Line source: percentage of non-realizable cells and quadrature points on triangular mesh.}
\label{tab:linesource_tri_table}
\end{table}

In Tables~\ref{tab:linesource_rect_table} and \ref{tab:linesource_tri_table} we summarize the performance of the slope limiter in the absence of the realizability 
limiter by counting the proportion of non-realizable cells (CM) and Gauss points (GP). We provide both the maximum values in the time interval (taken over a sample of $20$ equidistant
time points) and the values at the final time step for each mesh. It is clear that the characteristic limiter (CL$M$$\square/\triangle$) performs better than the slope limiter in primitive variables
(SL$M$$\square/\triangle$). In fact the final solution from the characteristic limiter (CL$0$$\square/\triangle$) is realizable in the mean on both meshes. However, realizability in the mean and at the Gauss points is not guaranteed
at all time steps on the time interval. These results further reinforce the need for the realizability limiter.

%%%%%%%%%%%%%%%%%%%%%%%%%%%%%%%%%%%%%%%%%%
%Hdisk
%\newpage
\subsection{Homogeneous Disk}
Our next test case is the two-dimensional version of the homogeneous sphere test, which is often used to test radiation transport codes, see e.g. \cite{Abdikamalov2012,Bruenn1985,Rampp2002,Sumiyoshi2012}. It consists of a static homogeneous and isothermal sphere that radiates in vacuum \cite{Radice2013648}. It is well-suited to test moment approximations since it admits an analytical solution \cite{smit1997hyperbolicity}. The parameters in the two-dimensional setting are as follows:
\begin{itemize}
\item Domain: $[-5,5]^2$ with $h=0.05$
\item Time: 3.00
\item Parameters: Let $D$ be the unit disk $D=\{ (x,y):\ x^2+y^2\leq 1\}$. Set $\sigma_s=0$; $\sigma_a=10$ on $D$, zero otherwise; source $q^{(0)}=1$ on $D$, zero otherwise; $q^{(1)} = 0$
\item Initial condition: $\psi^{(0)}=10^{-10}$, $\psi^{(1)}=0$
\item Boundary conditions: $\psi^{(0)}=10^{-10}$, $\psi^{(1)}=0$
\end{itemize}

This test case is numerically challenging due to the discontinuous parameters. Furthermore, its solution is again radially symmetric, simplifying the analysis of the quality of the limiter configurations.

\begin{figure}[h]
\def\localpath{./Images/HomDisk/}
\centering
\externaltikz{Hdisk4a}{\input{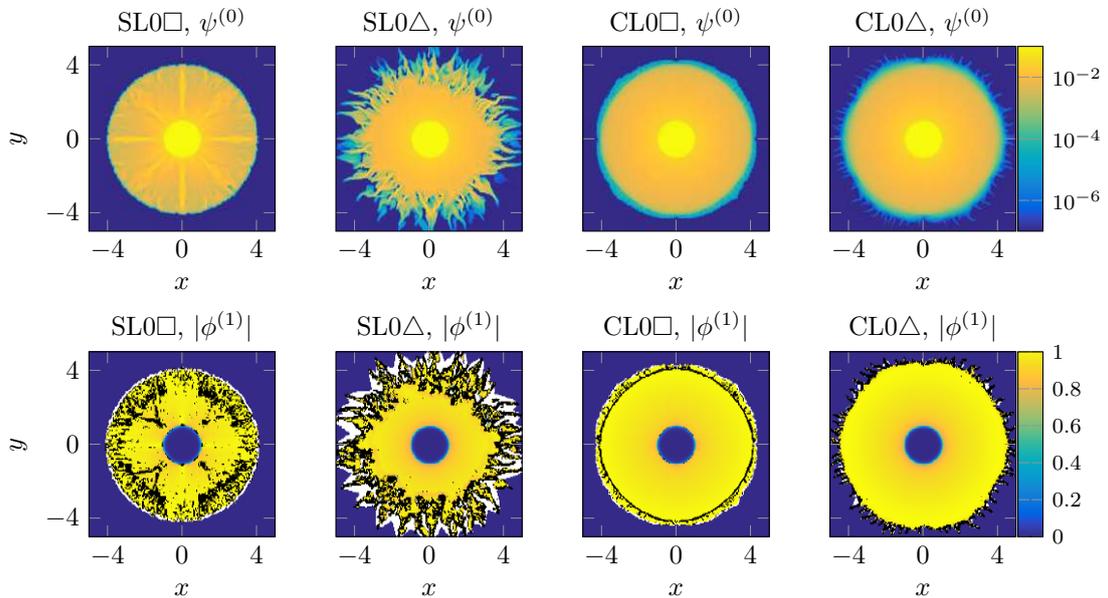}}
\caption{Homogeneous disk: comparison of primitive and characteristic slope limiter.}
\label{fig:Hdiska}
\end{figure}
As in the line source test case, we observe that the application of the primitive-variable limiter results in an oscillatory solution which is not realizable. This is shown in Figure~\ref{fig:Hdiska}. In case of the triangular mesh strong filaments form at the edge of the disk. Using the characteristic limiter slightly improves the solution. However, and in contrast to the line source, even with $M=0$ 
the characteristic limiter is no longer able to preserve realizability in the mean (see Tables~\ref{tab:hdisk_rect_table} and~\ref{tab:hdisk_tri_table}), furthermore, a larger proportion of
the Gauss points are not realizable. Additionally, at the boundary of the disk the quality of the solution is strongly reduced due to the bad condition number of the transformation matrix, which 
goes to infinity for values of $\lvert\phi^{1}\rvert$ approaching $1$ \cite{AllHau12}.

\begin{figure}[h]
\def\localpath{./Images/HomDisk/}
\centering
\externaltikz{Hdisk4b}{\input{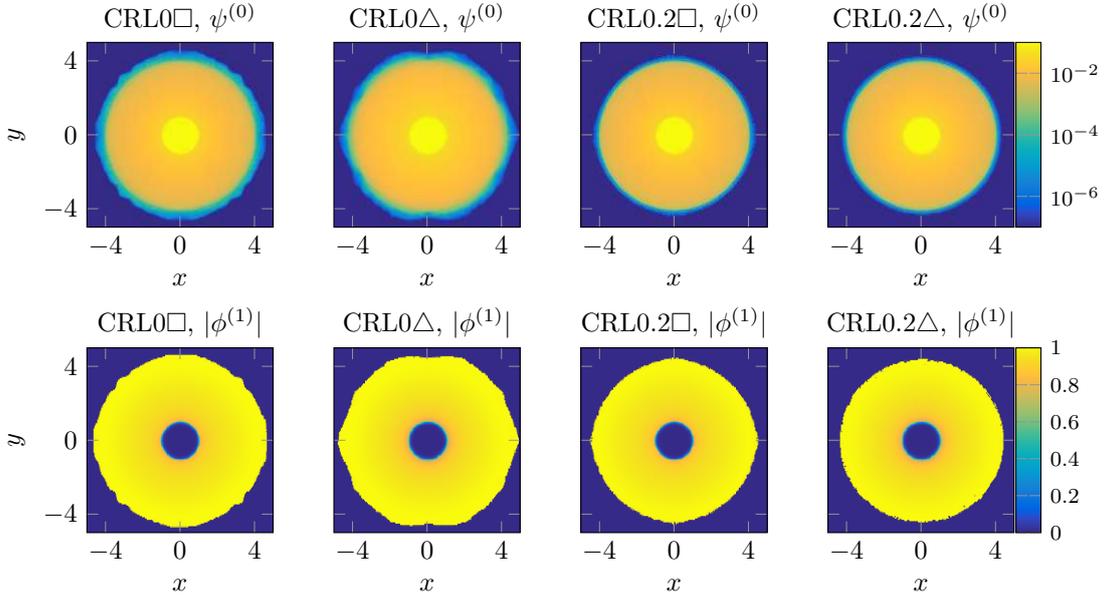}}
\caption{Homogeneous disk: comparison of the characteristic limiter with and without realizability limiter}
\label{fig:Hdiskb}
\end{figure}
Adding the realizability limiter strongly improves the quality of the solution (Figure~\ref{fig:Hdiskb}). As before, choosing $M=0.2$ gives less diffusive results.

\begin{table}
\centering
\begin{tabular}{l|| r@{.}l r@{.}l r@{.}l r@{.}l r@{.}l }
& \multicolumn{2}{c}{SL$\infty\square$}& \multicolumn{2}{c}{SL$0\square$}& \multicolumn{2}{c}{SL$0.2\square$} & \multicolumn{2}{c}{CL$0\square$}& \multicolumn{2}{c}{CL$0.2\square$}\\
\cmidrule(r){1-1}\cmidrule(r){2-3} \cmidrule(r){4-5} \cmidrule(r){6-7} \cmidrule(r){8-9} \cmidrule(r){10-11} \
$\max\limits_t$ GP  & 24 & 13\%& 12 & 46\%& 18 & 95\%& 1 & 57\%& 5 & 53\%\\
$\max\limits_t$ CM  & 21 & 95\%& 7 & 19\%& 15 & 08\%& 0 & 22\%& 1 & 14\%\\
GP$(t_{f})$ & 4& 93\%&12& 46\%& 18 & 46\%& 1 & 23\%& 2 & 91\%\\
CM$(t_{f})$ & 0& 26\%& 7 & 19\%& 9 & 39\%& 0 & 06\%& 0 & 06\%\\
\end{tabular}
\caption{Homogeneous disk: percentage of non-realizable cells and quadrature points on rectangular mesh.}
\label{tab:hdisk_rect_table}
\end{table}
\begin{table}
\centering
\begin{tabular}{l|| r@{.}l r@{.}l r@{.}l r@{.}l r@{.}l }
& \multicolumn{2}{c}{SL$\infty\triangle$}& \multicolumn{2}{c}{SL$0\triangle$}& \multicolumn{2}{c}{SL$0.2\triangle$} & \multicolumn{2}{c}{CL$0\triangle$}& \multicolumn{2}{c}{CL$0.2\triangle$}\\
\cmidrule(r){1-1}\cmidrule(r){2-3} \cmidrule(r){4-5} \cmidrule(r){6-7} \cmidrule(r){8-9} \cmidrule(r){10-11} \
$\max\limits_t$ GP & 71 & 75\%& 34&25\% &  41 & 73\%& 5 & 65\% & 33 & 65\%\\
$\max\limits_t$ CM & 71 & 62\% & 34&41\% & 40 & 18\%& 5 & 75\% & 32 & 64\%\\
GP$(t_{f})$ & 71 & 75\%& 34 & 25\%& 41 & 73\%& 5 & 65\%& 33 & 65\%\\
CM$(t_{f})$ & 71 & 61\%& 34 & 41\%& 40 & 18\%& 5 & 71\%& 32 & 64\%\\
\end{tabular}
\caption{Homogeneous disk: percentage of non-realizable cells and quadrature points on triangular mesh.}
\label{tab:hdisk_tri_table}
\end{table}
A quantitative analysis of the slope limiters (SL$M$$\square/\triangle$) and (CL$M$$\square/\triangle$) show that limiting in characteristic variables
yields better results but the solution remains unrealizable.
%\newpage
\subsection{Flash}
%\comment{M=0.5}

In this test case a bulk of mass is moving from the center of the domain to the right boundary \cite{Science2013}. The parameters are given as follows:
\begin{itemize}
\item Domain: $[-10,10]^2$ with $h =0.06$
\item Time: 6
\item Parameters: Let $D$ be unit disk with radius $0.5$, $D=\{ (x,y):\ x^2+y^2\leq 0.25\}$. Set $\sigma_s=0$; $\sigma_a=0$; source $q^{(0)}=0$, $q^{(1)} = 0$
\item Initial condition: $\psi^{(0)}=1$, $\psi^{(1)}_x=0.9$, $\psi^{(1)}_y=0$ on $D$, $\psi^{(0)}=10^{-10}$, $\psi^{(1)}=0$ otherwise
\item Boundary conditions: $\psi^{(0)}=10^{-10}$, $\psi^{(1)}=0$
\end{itemize}

Due to the initial condition which is placed close to the boundary of realizability this test is well suited to show the inability of standard slope limiters to preserve realizability. This is demonstrated in Figure~\ref{fig:flash4a} where similar effects as before occur.

\begin{figure}[htbp]
\def\localpath{./Images/Flash/}
\centering
\externaltikz{flash4a}{\input{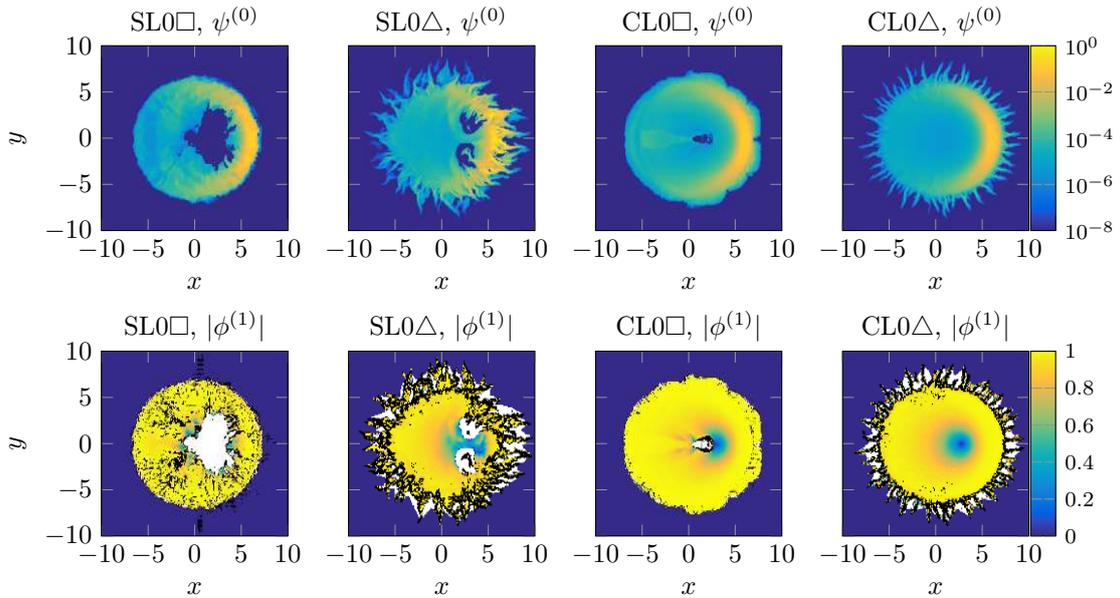}}
\caption{Flash test: comparison of primitive and characteristic slope limiter.}
\label{fig:flash4a}
\end{figure}

\begin{figure}[htbp]
\def\localpath{./Images/Flash/}
\centering
\externaltikz{flash4}{\input{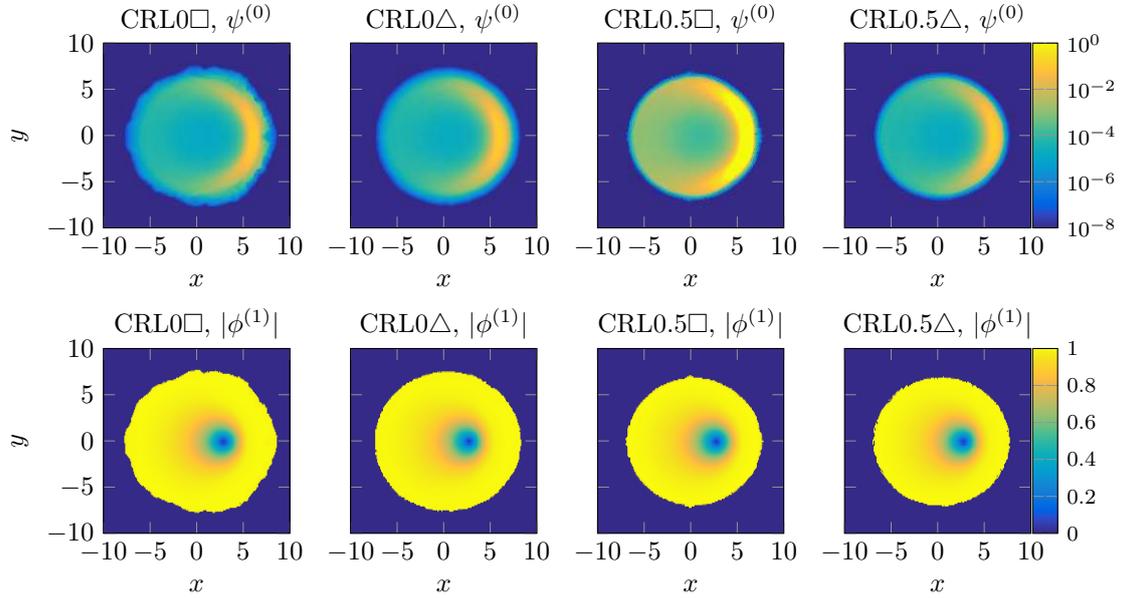}}
\caption{Flash test: characteristic limiter with realizability limiter for different values of $M$.}
\label{fig:flash4}
\end{figure}

A comparison of Figures~\ref{fig:flash4a} and~\ref{fig:flash4} confirms that the combination of the characteristic limiter with the realizability
limiter yields the best solutions. The performance of the slope limiter without the realizability limiter in Tables~\ref{tab:flash_rect_table} and~\ref{tab:flash_tri_table}
is consistent with previous observations in the line source and homogeneous disk test cases.

\begin{table}
\centering
\begin{tabular}{l|| r@{.}l r@{.}l r@{.}l r@{.}l r@{.}l }
& \multicolumn{2}{c}{SL$\infty\square$}& \multicolumn{2}{c}{SL$0\square$}& \multicolumn{2}{c}{SL$0.5\square$} & \multicolumn{2}{c}{CL$0\square$}& \multicolumn{2}{c}{CL$0.5\square$}\\
\cmidrule(r){1-1}\cmidrule(r){2-3} \cmidrule(r){4-5} \cmidrule(r){6-7} \cmidrule(r){8-9} \cmidrule(r){10-11} \
$\max\limits_t$ GP & 14 & 84\%& 13 & 97\%& 20 & 88\%& 0 & 86\%& 19 & 75\%\\
$\max\limits_t$ CM & 11 & 77\%& 11 & 53\%& 14 & 27\%& 0 & 22\%& 15 & 01\%\\
GP$(t_{f})$ & 14 & 70\%& 13 & 97\%& 20 & 88\%& 0 & 86\%& 19 & 75\%\\
CM$(t_{f})$ & 9 & 50\%& 11 & 53\%& 14 & 27\%& 0 & 22\%& 15 & 01\%\\
\end{tabular}
\caption{Flash: percentage of non-realizable cells and quadrature points on rectangular mesh}
\label{tab:flash_rect_table}
\end{table}

\begin{table}
\centering
\begin{tabular}{l|| r@{.}l r@{.}l r@{.}l r@{.}l r@{.}l }
& \multicolumn{2}{c}{SL$\infty\triangle$}& \multicolumn{2}{c}{SL$0\triangle$}& \multicolumn{2}{c}{SL$0.5\triangle$} & \multicolumn{2}{c}{CL$0\triangle$}& \multicolumn{2}{c}{CL$0.5\triangle$}\\
\cmidrule(r){1-1}\cmidrule(r){2-3} \cmidrule(r){4-5} \cmidrule(r){6-7} \cmidrule(r){8-9} \cmidrule(r){10-11} \
$\max\limits_t$ GP & 55 & 21\%& 24 & 12\%& 50 & 14\%& 5 & 45\%& 50 & 99\%\\
$\max\limits_t$ CM & 55 & 18\% & 24 & 27\% & 49 & 10\% & 5 & 63\%& 39 & 00\%\\
GP$(t_{f})$ & 55 & 28\%& 24 & 12\%& 50 & 14\%& 5 & 45\%& 40 & 99\%\\
CM$(t_{f})$ & 55 & 18\%& 24 & 27\%& 49 & 10\%& 5 & 63\%& 39 & 00\%\\
\end{tabular}
\caption{Flash: percentage of non-realizable cells and quadrature points on triangular mesh.}
\label{tab:flash_tri_table}
\end{table}
\section{Conclusions and outlook}
In this work, we have investigated a third order realizability-preserving DG method to approximate solutions to the $M_1$ model of radiation transport. The results show that in all test cases presented, the realizability limiter is imperative to ensure the realizability of the numerical solutions.

Based on these results, we advocate the $M_1$ model as an ideal test case that pushes the boundaries of high-order realizability-preserving numerical methods. The model equations are highly nonlinear, and are well-posed only in a geometrically complex (yet convex) domain of realizability. Furthermore, practically relevant solutions are not just wave fronts, but rather beam-like solutions, or they possess Dirac-like source terms. Solutions and material coefficients tend to vary over several orders of magnitude. Furthermore, so-called source-detector problems in radiation transport require an accurate solution (in terms of the relative error) at one or more specific points in space.

The results presented in this paper cast some doubt on whether DG is a competitive numerical scheme for $M_1$, considering the accuracy requirements commonly encountered in practice. For instance, for the homogeneous disk test case the DG solution possesses around $10^6$ degrees of freedom. It is comparable in quality to a Lax-Friedrichs (LF) solution with roughly the same number of degrees of freedom ($2048\times 2048\approx 4\cdot 10^6$), which however is significantly faster to compute because no limiting is needed. Similarly, in the line source test case, the best DG solution is comparable to a $1024\times 1024$ LF solution.

In several test cases, the high-order DG method yields reliable results only if a full limiting strategy is employed, including transformations to and from the characteristic variables. Not only do those transformations incur a significant cost, they also become less and less well conditioned when higher accuracy requirements are imposed. It is therefore questionable whether, for the $M_1$ model, the high-order DG methodology will ever become more efficient than a simple monotone first-order scheme that automatically guarantees realizability.

That being said, it should be pointed out that the DG methodology studied here, albeit being an established standard method, can most certainly be improved. For instance, more sophisticated limiting techniques, such as WENO limiting with KXRCF shock detection \cite{Krivodonova2004,Qiu2005a}, or limiters that incorporate high-order approximations during the limiting process~\cite{Burbeau2001,Krivodonova2007}, could be combined with the realizability-preserving limiter. This would remove the need for the problem-dependent minmod parameter $M$. Furthermore, more efficient time stepping schemes could be applied, such as explicit low-storage SSP Runge-Kutta schemes \cite{Ketcheson2008}, SSP multi-step Runge-Kutta methods \cite{Ketcheson,Bresten2013} (which both require less evaluations of the differential operator $\mathcal{L}_h$), or even Implicit-Explicit schemes \cite{Ascher1997,Kanevsky2007} (to remove the dependence on the right-hand side ($\sigma_a$ and $\sigma_s$) in the CFL condition \eqref{eqn:cfl}).

Further research steps include realizability limiting for higher-order minimum-entropy models (i.e., $M_N$ with $N\geq 2$) in two and three dimensions, similar to \cite{Schneider2015a}. The main challenge in that context is that a complete characterization of the realizability set by means of simple algebraic criteria is still outstanding. One possible remedy is to use discrete realizability criteria with respect to a quadrature rule \cite{AllHau12}. Moreover, in general a high number of quadrature points is necessary to produce solutions without discernible discrete ray artifacts \cite{Schneider2016,Schneider2015a,Garrett2014}, resulting in an extremely high cost for realizability limiting. Hence less expensive strategies to guarantee realizability need to be developed.

\section*{Acknowledgements}
B.\ Seibold wishes to acknowledge support by the National Science
Foundation under grant DMS-1719640. M.\ Frank acknowledges support from Deutsche Forschungsgemeinschaft (DFG) under Grant FR 2841/6-1.
We thank Philipp Monreal for co-developing the unstructured-mesh DG code.

% Bibliography
%%%%%%%%%%%%%%
\bibliographystyle{bibstyle}
\bibliography{RadLit}

\end{document}